\NeedsTeXFormat{LaTeX2e}
\documentclass[11pt]{amsart}
\usepackage{verbatim}
\usepackage{amsmath}
\usepackage{enumerate}
\usepackage{amssymb}
\usepackage{color}
\usepackage{url}
\usepackage{graphicx}
\usepackage{fullpage}
\graphicspath{ {images/} }

\newcommand\+{\;\lower\plusheight\hbox{$+$}\;}
\newcommand\lldots{\;\lower\plusheight\hbox{$\cdots$}\;}

\newtheorem{Theorem}{Theorem}[section]
\newtheorem{Lemma}[Theorem]{Lemma}
\newtheorem{Corollary}[Theorem]{Corollary}
\newtheorem{Definition}[Theorem]{Definition}
\newtheorem{Example}[Theorem]{Example}
\newtheorem{Remark}[Theorem]{Remark}

\def\Re{\mathop{\rm Re}\nolimits}

\def\Im{\mathop{\rm Im}\nolimits}

\setbox0=\hbox{$+$}
\newdimen\plusheight
\plusheight=\ht 0 \setbox0=\hbox{$-$}
\newdimen\minusheight
\minusheight=\ht0

\setbox0=\hbox{$\cdots$}
\newdimen\cdotsheight
\cdotsheight=\plusheight


\font\cute=cmitt10 at 12pt 
\newcommand{\kay}{{\text{\cute k}}}

\newcommand{\Z}{\mathbb Z}
\newcommand{\Q}{\mathbb Q}

\newcommand{\SO}{\operatorname{SO}}
\newcommand{\SL}{\operatorname{SL}}
\newcommand{\Mp}{\operatorname{Mp}}

\begin{document}
\title{Borcherds Products on Unitary Group $U(2,1)$}
\author{Tonghai Yang and Dongxi Ye}

\subjclass[2010]{11F27, 11F41, 11F55, 11G18, 14G35.}
\keywords{Borcherds product, unitary modular form, Heegner divisor, unitary modular variety.}
\thanks{The research of the first author is supported by a NSF grant DMS-1500743.}
\address{
Department of Mathematics, University of Wisconsin\\
480 Lincoln Drive, Madison, Wisconsin, 53706 USA}
\email{thyang@math.wisc.edu}
\address{
Department of Mathematics, University of Wisconsin\\
480 Lincoln Drive, Madison, Wisconsin, 53706 USA}
\email{lawrencefrommath@gmail.com}

\begin{abstract}
In this note, we construct canonical bases for the spaces of weakly holomorphic
modular forms with poles supported at the cusp $\infty$ for $\Gamma_{0}(4)$ of
integral weight $k$ for $k\leq-1$, and we make use of the basis elements for the case
$k=-1$ to construct explicit Borcherds products on unitary group $U(2,1)$.
\end{abstract}

\maketitle
\allowdisplaybreaks
\numberwithin{equation}{section}
\section{Introduction}

In 1998, Borcherds developed a new method to produce memomorphic modular forms on an orthogonal Shimura variety from weakly holomorphic classical modular forms via regularized theta liftings. These memomorphic modular forms have two distinct properties. The first one is the so-called Boorcherds product expansion at a cusp of the Shimura variety--his original motivation to prove the Moonshine conjecture. The second is that the divisor of these modular forms are known to be a linear combination of special divisors dictated by the principal part of the input weakly holomorphic forms. The second feature has been extended to produce so-called automorphic green functions for special divisors using harmonic Maass forms via regularized theta lifting by Bruinier (\cite{jan} and Bruinier-Funke (\cite{BF}), which turned out to be very useful to generalization of the well-known Gross-Zagier formula (\cite{GZ}) and the beautiful Gross-Zagier factorization formula of singular moduli (\cite{GZSingular}) to Shimura varieties of orthogonal type $(n, 2)$ and unitary type $(n, 1)$ (see for example \cite{BY09},  \cite{BKY12},  \cite{BHY15}, \cite{AGHMPsmall}, \cite{AGHMPbig}, \cite{Schofer}, \cite{YY16}). On the other hand, the Borcherds product expansion and in particular its integral structure is essential to prove modularity of some generating functions of arithmetic divisors on these Shimura varieties (\cite{BHKRY}, \cite{HMP}). Borcherds products are also closely related to Mock theta functions (see for example \cite{Ono} and references there).

 We should mention that the analogue of the Borcherds product to unitary Shimura varieties of type $(n, 1)$ has been worked out by Hofmann (\cite{hof}). The Borcherds product expansion in the unitary case is a little more complicated as it is a Fourier-Jacobi expansion rather than Fourier expansion. The purpose of this note is to give some explicit examples of these Borcherds product expansion in concrete term. For this reason, we focus on the Picard modular surface associated to the Hermitian lattice $L=\Z[i] \oplus \Z[i] \oplus \frac{1}2 \Z[i]$ with
 Hermitian form
 $$
 \langle x,  y \rangle = x_1 \bar{y}_3 + x_3 \bar{y}_1 + x_2 \bar{y}_2.
 $$
 Our inputs are weakly holomorphic modular forms for $\Gamma_0(4)$ of weight $-1$, character $\chi_{-4}:=\left(\frac{-4}{}\right)$ which have poles only at the cusp $\infty$, which we denote by $M_{-k}^{!, \infty}(\Gamma_0(4), \chi_{-4}^k)$ with $k=1$. Our first result (Theorem \ref{basisall}) is to give a canonical basis $F_{k,m}$ ($m \ge 1$) for the infinitely dimensional vector space for every $k \ge 1$. The even $k$ case was given by Haddock and Jenkins in \cite{paul} in a slightly different fashion. Since $\Gamma_0(4)$ is normal in $\SL_2(\Z)$, a simple conjugation also gives a canonical basis for the space of weakly holomorphic forms of $\Gamma_0(4)$ with weight $-k$, character $\chi_{-4}^k$, and having poles only at cusp $0$ (resp. $\frac{1}2$).

 Next, we use standard induction procedure to produce vector valued weakly modular forms for $\SL_2(\Z)$ using our lattice $L$ which will be used to construct
 Picard modular forms on $U(2, 1)$ (described above). Although the resulting vector valued modular forms for $\SL_2(\Z)$ from the three different scalar valued spaces $M_{-k}^{!, P}(\Gamma_0(4), \chi_{-4}^k)$, $P=\infty, 0, \frac{1}2$ are linearly independent, they don't generate the whole space. We find it interesting.  This concludes Part I of our note, which  should be of independent interest.

 In Part II, we focus on the unitary group $U(2, 1)$ associated to the above Hermitian form and give explicit Borcherds product expansion of the Picard  modular forms constructed from  $F_{m}=F_{1, m}$.  The delicate part is to choose a proper Weyl chamber, which is a dimensional $3$ real manifold and described it explicitly and carefully. Our main formula is Theorem \ref{newbor}. We remark that the same method also applies to high dimensional unitary Shimura varieties of unitary type $(n, 1)$ using forms in $M_{1-n}^{!, P}(\Gamma_0(4), \chi_{-4}^k)$ where $P$ is a cusp for $\Gamma_0(4)$. We restrict to $U(2, 1)$ for being as explicit as possible.

\section{Part I: Vector Valued Modular Forms}
In this part, we derive canonical basis for the space $M_{-k}^{!,\infty}(\Gamma_{0}(4),\chi_{-4}^{k})$ for any integer $k\geq0$, and investigate the properties of the vector valued modular forms arising from $M_{-k}^{!,\infty}(\Gamma_{0}(4),\chi_{-4}^{k})$. For completeness, we will also give canonical basis for $M_{-k}^{!,0}(\Gamma_{0}(4),\chi_{-4}^{k})$ and $M_{-k}^{!,\frac{1}{2}}(\Gamma_{0}(4),\chi_{-4}^{k})$.

\subsection{Canonical Basis for $M_{-k}^{!,\infty}(\Gamma_{0}(4),\chi_{-4}^{k})$}\mbox{}\\

Let $\chi_{-4}(\cdot):=\left(\frac{-4}{\cdot}\right)$ be the Kronecker symbol modulo $4$. Recall that $X_0(4)$ has  3 cusps, represented by $\infty$, $0$, and $\frac{1}2$.  For each cusp $P$,  let $M_{-k}^{!,P}(\Gamma_{0}(4),\chi_{-4}^{k})$ denote the space of weakly holomorphic modular forms, which are holomorphic everywhere except at the cusp $P$, of weight $-k$ on $\Gamma_0(4)$ with character $\chi_{-4}^{k}$. We will focus mainly on the cusp $\infty$ and will remark on other cusps (very similar) in the end. We will also denote $M_{-k}^!(\Gamma_0(4), \chi_{-4}^k)$ for the space of weakly holomorphic modular forms for $\Gamma_0(4)$ of weight $-k$  and character $\chi_{-4}^k$.  

Let $\tau$ be a complex number with positive imaginary part, and set $q=e(\tau)=e^{2\pi i\tau}$, and $q_{r}=e^{2\pi i\tau/r}$. The Dedekind eta function is defined by
$$
\eta(\tau)=q^{1/24}\prod_{n=1}^{\infty}(1-q^{n}).
$$
Throughout this paper, we write $\eta_{m}$ for $\eta(m\tau)$.
The well known Jacobi theta functions are defined by
$$
\vartheta_{00}(\tau)=\sum_{n=-\infty}^{\infty}q^{n^{2}},\quad\vartheta_{01}(\tau)=\sum_{n=-\infty}^{\infty}(-q)^{n^{2}},\quad\vartheta_{10}(\tau)=\sum_{n=-\infty}^{\infty}q^{\left(n+\frac{1}{2}\right)^{2}}.
$$
Now we define three functions as follows.
\begin{align}
\label{y}
\theta_{1}=\theta_{1}(\tau):&=\frac{1}{16}\vartheta^{4}_{10}(\tau)=\frac{\eta_{4}^{8}}{\eta_{2}^{4}}=q+O(q^{2}),\\
\label{z}
\theta_{2}=\theta_{2}(\tau):&=\vartheta^{2}_{00}(\tau)=\frac{\eta_{2}^{10}}{\eta_{1}^{4}\eta_{4}^{4}}=1+O(q),\\
\label{j4}
\varphi_{\infty}=\varphi_{\infty}(\tau):&=\left(\frac{\eta_{1}}{\eta_{4}}\right)^{8}=q^{-1}+O(1).
\end{align}
Here are some basic facts \cite{paul} about the functions $\theta_{1}$, $\theta_{2}$ and $\varphi_{\infty}$.
\begin{enumerate}
\item{$\theta_{1}(\tau)$ is a holomorphic modular form of weight~2 on $\Gamma_{0}(4)$ with trivial character, has a simple zero at the cusp $\infty$, and vanishes nowhere else.}

\item{$\theta_{2}(\tau)$ is a holomorphic modular form of weight~1 on $\Gamma_{0}(4)$ with character $\chi_{-4}$, has a zero of order $\frac{1}{2}$ at the irregular cusp $\frac{1}{2}$, and vanishes nowhere else.}

\item{$\varphi_{\infty}(\tau)$ is a modular form of weight~0 on $\Gamma_{0}(4)$ with trivial character, has exactly one simple pole at the cusp $\infty$ and a simple zero at the cusp $0$.}
\end{enumerate}

\begin{Theorem}
\label{basisall}
\begin{enumerate}
\item For $k \ge 1 $ odd,  there is a (canonical) basis $F_{k, m}$ ($m \ge 1$ of $M^{!,\infty}_{-k}(\Gamma_{0}(4),\chi_{-4})$ whose Fourier expansion has the following property:
    $$
 F_{k, m}  = q^{-\frac{k+1}2 -m+1 } + \sum_{n \ge  -\frac{k-1}2} c(n) q^n.
$$

\item For $k >1$ even, there is a (canonical) basis $F_{k, m}$ ($m \ge 1$ of $M^{!,\infty}_{-k}(\Gamma_{0}(4))$ whose Fourier expansion has the following property:
$$
F_{k,m} =q^{-\frac{k}{2}-m+1}+\sum_{n\geq-\frac{k}{2}+1}c(n)q^{n},
$$

\end{enumerate}

\end{Theorem}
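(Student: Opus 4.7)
The plan is to exhibit an explicit family $\{G_{k,m}\}_{m\ge1}$ in $M^{!,\infty}_{-k}(\Gamma_{0}(4),\chi_{-4}^{k})$ whose leading Fourier terms cover every admissible pole order, show it spans by identifying the space with polynomials in the Hauptmodul $\varphi_{\infty}$, and then pass to the canonical basis $F_{k,m}$ by an elementary triangular row reduction.

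First I would set
\[
G_{k,1}=\begin{cases}\theta_{2}\,\theta_{1}^{-(k+1)/2}, & k\text{ odd},\\ \theta_{1}^{-k/2}, & k\text{ even},\end{cases}\qquad G_{k,m}=\varphi_{\infty}^{m-1}\,G_{k,1}.
\]
Properties (1)--(3) of $\theta_{1},\theta_{2},\varphi_{\infty}$ recalled just above give at once that $G_{k,m}$ has weight $-k$ and character $\chi_{-4}^{k}$, is holomorphic on $\mathcal{H}$, and has no pole at the cusps $0$ or $\tfrac{1}{2}$: at $0$, $\theta_{1}$ and $\theta_{2}$ are nonvanishing while $\varphi_{\infty}$ has a simple zero, and at $\tfrac{1}{2}$ the half-order vanishing of $\theta_{2}$ matches the half-integer shift imposed by $\chi_{-4}$ at the irregular cusp. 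The $q$-expansion of $G_{k,m}$ at $\infty$ begins $q^{-n_{m}}$ with coefficient $1$, where $n_{m}=(k+1)/2+m-1$ for odd $k$ and $n_{m}=k/2+m-1$ for even $k$.

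Next I would show that $\{G_{k,m}\}$ spans the space. For any $F$ in it, the quotient $F/G_{k,1}$ has weight $0$ and trivial character. A cusp-by-cusp check, using the nonvanishing of $\theta_{1}$ everywhere, the nonvanishing of $\theta_{2}$ away from $\tfrac{1}{2}$, and the fact that every $\chi_{-4}$-form vanishes to order at least $\tfrac{1}{2}$ at the irregular cusp $\tfrac{1}{2}$, shows that $F/G_{k,1}$ is holomorphic at $0$ and $\tfrac{1}{2}$ and has its only pole at $\infty$. Since $X_{0}(4)$ has genus $0$ and $\varphi_{\infty}$ is a Hauptmodul whose sole pole is at $\infty$, $F/G_{k,1}$ must be a polynomial in $\varphi_{\infty}$, writing $F$ as a finite linear combination of the $G_{k,m}$. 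Linear independence of the $G_{k,m}$ is immediate from their distinct pole orders, so they form a basis.

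Finally I would define $F_{k,m}$ inductively by
\[
F_{k,m}=G_{k,m}-\sum_{j=1}^{m-1}a_{j,m}\,F_{k,j},
\]
choosing $a_{j,m}$ to annihilate the $q^{-n_{j}}$-coefficient of the right-hand side for every $j<m$; this recursion is well-defined because each $F_{k,j}$ has leading coefficient $1$ at $q^{-n_{j}}$ and zero coefficient at $q^{-n_{i}}$ for $i<j$. The resulting $F_{k,m}$ has exactly the Fourier shape claimed in the theorem. Uniqueness is automatic: a second candidate would differ from $F_{k,m}$ by an element whose $q$-expansion has pole order strictly less than $n_{1}$, and writing that element as $P(\varphi_{\infty})G_{k,1}$ via the spanning step forces $\deg P<0$, hence $P=0$. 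The only genuinely delicate step is the cusp analysis at the irregular cusp $\tfrac{1}{2}$, where the half-integer local parameter imposed by $\chi_{-4}$ must be tracked carefully; once that bookkeeping is done the rest is purely formal linear algebra.
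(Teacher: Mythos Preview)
Your proposal is correct and follows essentially the same approach as the paper: both build the basis from $G_{k,1}=\theta_{2}\theta_{1}^{-(k+1)/2}$ (resp.\ $\theta_{1}^{-k/2}$) times powers of the Hauptmodul $\varphi_{\infty}$, then triangularize. The only difference is that the paper invokes the valence formula on $X_{0}(4)$ to obtain the bound $\mathrm{ord}_{\infty}(F)\le -(k+1)/2$ (hence uniqueness and spanning) directly, whereas you deduce the same fact by showing $F/G_{k,1}$ is a polynomial in $\varphi_{\infty}$ via the genus-$0$ structure.
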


 \begin{proof} We prove (1) first.  Notice that $X_0(4)$ has no elliptic points  \cite[Section 3.9]{diamond}. For $F \in M_{-k}^{!, \infty}(\Gamma_0(4), \chi_{-4})$, the valence formula for $\Gamma_{0}(4)$ asserts that
$$
\sum_{z\in\Gamma_{0}(4)\backslash\mathbb{H}}{\rm ord}_{z}(F)+{\rm ord}_{\infty}(F)+{\rm ord}_{0}(F)+{\rm ord}_{1/2}(F)=-\frac{k}{2}.
$$
This implies ${\rm ord}_{1/2} F \ge \frac{1}2$ ($1/2$ is the unique irregular cusp), ${\rm ord}_{\infty}(F)\leq-\frac{k+1}{2}$. This implies the uniqueness of the basis $\{F_{k, m}\}$ if it exists.  We prove the existence by inductively construct a sequence of  monic polynomials $P_{k, m}(x)$ of  degree $m$ ($m \ge 0$) such that $F_{k, m+1}=\theta_2\theta_1^{-\frac{k+1}2} P_{k, m} (\varphi_\infty)$ give the basis we seek, i.e.,   with the following property
\begin{equation} \label{eq:Pkm}
F_{k, m+1}=\theta_2\theta_1^{-\frac{k+1}2} P_{k, m} (\varphi_\infty)  = q^{-\frac{k+1}2 -m } + \sum_{n  \geq -\frac{k-1}2} c(n) q^n.
\end{equation}
(The awkward notation $F_{k, m+1}$ instead of $F_{k, m}$ will be clear in last section.)

\begin{enumerate}

\item   Notice that $\theta_2\theta_1^{-\frac{k+1}2} \in  M_{-k}^{!, \infty}(\Gamma_{0}(4),\chi_{-4})$ with
$$
\theta_2\theta_1^{-\frac{k+1}2} = q^{-\frac{k+1}2} + \sum_{n \geq -\frac{k-1}2} c(n) q^n.
$$
 So we can and will first define $P_{k, 0}=1$.

\item  For $m \ge 1$, assume that $P_{k, m-1}(x)  \in  \mathbb C[x] $ is constructed  with degree $m-1$, leading coefficient $1$, and   the property
$$
F_{k, m}=\theta_2\theta_1^{-\frac{k+1}2} P_{k, m-1} (\varphi_\infty)  = q^{-\frac{k+1}2 -m +1} + \sum_{n  \geq -\frac{k-1}2} c(n) q^n.
$$
Then  it is easy to see
$$
F_{k, m} \varphi_\infty
=q^{-\frac{k+1}2-m} + \sum_{n > -\frac{k+1}2 -m} d(n) q^n.
$$
Let
$$
P_{k, m} =x P_{k, m-1} - \sum_{n=-\frac{k+1}2 -m+1}^{-\frac{k+1}2} d(n) P_{k, -n},
$$
and
$$F_{k, m+1} =\theta_2\theta_1^{-\frac{k+1}2} P_{k, m} (\varphi_\infty).
$$
Then   $F_{k, m+1}$ satisfies (\ref{eq:Pkm}). By induction, we prove the existence of the basis $\{F_{k, m}\}$, and (1).
\end{enumerate}

The proof of (2) is similar and is left to the reader. The basis $\{ F_{k,m+1}\}$, $m \ge 0$, has the form
\begin{equation}
F_{k, m+1} = \theta_{1}^{-\frac{k}{2}}Q_{k,m}(\varphi_{\infty})=q^{-\frac{k}{2}-m}+\sum_{n=-\frac{k}{2}+1}^{\infty}c(n)q^{n}
\end{equation}
for a unique monic polynomial $Q_{k, m}$ of degree $m$.

\end{proof}

\begin{Remark} The canonical basis given in Theorem \ref{basisall}(2) was given in a slightly different form first by Haddock and Jenkins \cite{paul}.
\end{Remark}

The following corollary follows directly from  the proof of  Theorem \ref{basisall}(1).
\begin{Corollary}
Every weakly holomorphic modular form $f(\tau)\in M_{-k}^{!,\infty}(\Gamma_{0}(4),\chi_{-4}^{k})$ with $k$ odd, vanishes at cusp $1/2$.
\end{Corollary}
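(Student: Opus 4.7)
The plan is to invoke the canonical basis from Theorem~\ref{basisall}(1). Since every $f \in M_{-k}^{!,\infty}(\Gamma_{0}(4),\chi_{-4}^{k})$ with $k$ odd is a finite $\mathbb{C}$-linear combination of the basis elements
$$
F_{k,m+1} = \theta_2\,\theta_1^{-(k+1)/2}\,P_{k,m}(\varphi_\infty), \qquad m\geq 0,
$$
it suffices to show that each $F_{k,m+1}$ vanishes at the cusp $1/2$.

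To do this I would simply read off the order at $1/2$ of each of the three factors from the three basic facts stated just before the theorem. Property~(2) gives ${\rm ord}_{1/2}(\theta_2) = 1/2$. Property~(1) says $\theta_1$ vanishes only at $\infty$, so $\theta_1^{-(k+1)/2}$ is holomorphic and nonzero at $1/2$. Property~(3) says $\varphi_\infty$ has its only pole at $\infty$ and its only zero at $0$; hence $\varphi_\infty$ takes a finite nonzero value at $1/2$, and the polynomial $P_{k,m}(\varphi_\infty)$ is then holomorphic at $1/2$ (possibly but harmlessly vanishing there if $\varphi_\infty(1/2)$ happens to be a root of $P_{k,m}$). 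Combining, ${\rm ord}_{1/2}(F_{k,m+1}) \geq 1/2 > 0$, giving the desired vanishing.

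There is essentially no obstacle, since the real content is already encoded in the proof of the theorem: the valence formula used there forced the total order to equal the half-integer $-k/2$ when $k$ is odd, and the $\theta_{2}$ factor was inserted into every basis element precisely to absorb that half-integer contribution at the irregular cusp $1/2$. The corollary is therefore an immediate structural consequence of the shape of the basis, which is why the authors flag it as following directly from the construction.
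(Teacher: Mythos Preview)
Your argument is correct, and it is drawn from the proof of Theorem~\ref{basisall}(1) just as the paper indicates. There is, however, a slight difference in emphasis worth noting. The most direct route, and the one the paper presumably has in mind, is the single line in that proof reading ``This implies ${\rm ord}_{1/2} F \ge \frac{1}{2}$'': the valence formula forces the sum of orders to equal the half-integer $-k/2$, and since $1/2$ is the unique irregular cusp and all other orders are nonnegative integers (the only pole being at~$\infty$), the order at $1/2$ must itself be at least $1/2$. This applies immediately to \emph{every} $f$ in the space, with no need to decompose into basis elements. Your approach instead passes through the explicit basis $F_{k,m+1}=\theta_2\theta_1^{-(k+1)/2}P_{k,m}(\varphi_\infty)$ and reads off the vanishing from the $\theta_2$ factor; this is equally valid but adds the (trivial) step of writing $f$ as a finite linear combination. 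You clearly understand both viewpoints, as your final paragraph shows; I would just lead with the valence-formula sentence, since it is the shorter and more intrinsic argument.
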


\subsection{Vector Valued Modular Form Arising from $M_{-k}^{!,\infty}(\Gamma_{0}(4),\chi_{-4}^{k})$}\mbox{}\\

Let $L$ be an even lattice over $\mathbb{Z}$ with symmetric non-degenerate bilinear form $(\cdot,\cdot)$ and associated quadratic form $Q(x)=\frac{1}2(x, x)$. Let $L'$ be the dual lattice of $L$. Assume that $L$ has rank $2m+2$ and signature $(2m,2)$. Then the Weil representation of the metaplectic group $\Mp_{2}(\mathbb{Z})$ on the group algebra $\mathbb{C}[L'/L]$ factors through $\SL_{2}(\mathbb{Z})$. Thus we have a unitary representation $\rho_{L}$ of $\SL_{2}(\mathbb{Z})$ on $\mathbb{C}[L'/L]$, defined by
\begin{align*}
\rho_{L}(T)\phi_{\mu}&=e(-Q(\mu))\phi_{\mu},\\
\rho_{L}(S)\phi_{\mu}&=\frac{\sqrt{i}^{2m-2}}{\sqrt{|L'/L|}}\sum_{\beta\in L'/L}e((\mu,\beta))\phi_{\beta}
\end{align*}
where $T=\begin{pmatrix}1&1\\0&1\end{pmatrix}$, $S=\begin{pmatrix}0&-1\\1&0\end{pmatrix}$, $\phi_{\gamma}$ for $\mu\in L'/L$ are the standard basis elements of $\mathbb{C}[L'/L]$ and  $e(z)=e^{2\pi iz}$. We remark that the Weil representation  $\rho_L$ depends only on the finite quadratic module $(L'/L, Q)$ (called the discriminant group of $L$), where $Q(x+L) =Q(x) \pmod 1 \in \Q/Z$.

Let $k$ be an integer and $\vec F$ be a $\mathbb{C}[L'/L]$ valued function on $\mathbb{H}$ and let $\rho=\rho_{L}$ be a representation of $\SL_{2}(\mathbb{Z})$ on $\mathbb{C}[L'/L]$. For $\gamma\in \SL_{2}(\mathbb{Z})$ we define the slash operator by
$$
\left(\left.\vec F\right|_{k,\rho}\gamma\right)(\tau)=(c\tau+d)^{-k}\rho(\gamma)^{-1}\vec F(\gamma\tau),
$$
where $\gamma=\begin{pmatrix}a&b\\c&d\end{pmatrix}$ acts on $\mathbb{H}$ via $\gamma\tau=\frac{a\tau+b}{c\tau+d}$.

\begin{Definition}
Let $k$ be an integer. A function $\vec F:\mathbb{H}\to\mathbb{C}[L'/L]$ is called a weakly holomorphic vector valued modular form of weight $k$ with respect to $\rho=\rho_{L}$ if it satisfies
\begin{enumerate}
\item{$\left.\vec F\right|_{k,\rho}\gamma=F$ for all $\gamma\in \SL_{2}(\mathbb{Z})$,}
\item{$\vec F$ is holomorphic on $\mathbb{H}$,}
\item{$\vec F$ is meromorphic at the cusp $\infty$.}
\end{enumerate}
The space of such forms is denoted by $M^{!}_{k,\rho}$.
\end{Definition}
The invariance of $T$-action implies that $\vec F\in M^{!}_{k,\rho}$ has a Fourier expansion of the form
$$
\vec F=\sum_{\mu\in L'/L}\sum_{\substack{n\in\mathbb{Q}\\n\gg-\infty}}c(n,\phi_\mu)q^{n}\phi_{\mu}.
$$
Note that $c(n,\phi_\mu)=0$ unless $n\equiv-Q(\mu)\pmod{1}$.

From now on, we focus on the special case  with the discriminant group $L'/L\cong \mathbb{Z}/2\mathbb{Z}\times\mathbb{Z}/2\mathbb{Z}$ with quadratic form $Q(x,y)=\frac{1}{4}(x^{2}+y^{2})\pmod{1}$. For our purpose (in last section), it is convenient to identify
 $\mathbb{Z}/2\mathbb{Z}\times\mathbb{Z}/2\mathbb{Z} \cong \mathbb{Z}[i]/2\mathbb{Z}[i]$, where $Q(z) =\frac{1}4 z \bar z \in \Q/\Z$. We write $\phi_{0}$, $\phi_{1}$, $\phi_{i}$ and $\phi_{1+i}$ for the basis elements of $\mathbb{C}[L'/L]$ corresponding to $(0,0)$, $(1,0)$, $(0,1)$ and $(1,1)$ respectively.

Let $F=F(\tau)\in M_{-k}^{!,\infty}(\Gamma_0(4),\chi_{-4})$ with $k$ odd and positive.  Then using $\Gamma_{0}(4)$-lifting, we can construct a vector valued modular form $\vec F=\vec F(\tau)$ arising from $F(\tau)$ as follows:
\begin{equation}
\vec{F}(\tau)=\sum_{\gamma\in\Gamma_0(4)\backslash{\rm SL}_{2}(\mathbb{Z})}(\left.F\right|_{-k}\gamma)\rho_{L}(\gamma)^{-1}\phi_{0}
=\frac{1}{2}\sum_{\gamma\in\Gamma_1(4)\backslash{\rm SL}_{2}(\mathbb{Z})}(\left.F\right|_{-k}\gamma)\rho_{L}(\gamma)^{-1}\phi_{0}.
\end{equation}
Define modular forms $F_{0}$, $F_{2}$ and $F_{3}$ as follows. Let
$$
\left.F\right|_{-k}\begin{pmatrix}0&-1\\1&0\end{pmatrix}=\sum_{n=0}^{\infty}a(n)q_{4}^{n}.
$$
Then for $j\in\{0, 2,3\}$, we write
$$
F_{j}=\sum_{n=0}^{\infty}a(4n+j)q_{4}^{4n+j}.
$$
We also define $F_{1/2}$ to be
$$
F_{1/2}=\left.F\right|_{-k}\begin{pmatrix}1&0\\2&1\end{pmatrix}=\sum_{n=0}^{\infty}b(n)q_{2}^{n}.
$$
Then a simple calculation gives
\begin{align}
\label{FFF}
\vec F(\tau)
&=\left(-2iF_{0}+F\right)\phi_{0}-2iF_{3}\phi_{1}-2iF_{3}\phi_{i}+\left(-2iF_{2}-F_{1/2}\right)\phi_{1+i}.
\end{align}
The following theorem gives some basic facts about $F_{0}$, $F_{2}$, $F_{3}$ and $F_{1/2}$.
\begin{Theorem}
With the above definitions, we have
\begin{align}
\label{f00}
F_{0}&\in M_{-k}^{!}(\Gamma_{0}(4),\chi_{-4}),\\
\label{f03}
F_{3}&\in M_{-k}^{!}(\Gamma_{0}(4),\chi_{1})\\
\intertext{where $\chi_{1}(\gamma)=\chi_{-4}(d)e(-ab/4)$ for
$\gamma=\begin{pmatrix}a&b\\c&d\end{pmatrix}\in\Gamma_{0}(4)$,}
\label{f02}
(2iF_{2}+F_{1/2})&\in M^{!}_{-k}(\Gamma_{0}(4),\chi_{2})\\
\intertext{where $\chi_{2}(\gamma)=\chi_{-4}(d)e(-ab/2)$ for $\gamma=\begin{pmatrix}a&b\\c&d\end{pmatrix}\in\Gamma_{0}(4)$,}
\intertext{and}
\label{f12}
F_{1/2}&\in M_{-k}^{!}(\delta^{-1}\Gamma_{0}(4)\delta,\chi_{-4})
\intertext{where $\delta=\begin{pmatrix}1&0\\2&1\end{pmatrix}$.}\nonumber
\end{align}
\end{Theorem}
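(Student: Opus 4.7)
My plan is to verify each assertion by a direct slash-operator computation, using two explicit presentations: the Fourier averaging
$F_j(\tau)=\tfrac14\sum_{\ell=0}^3 i^{-j\ell}(F|_{-k}ST^\ell)(\tau)$
(which follows from $(F|_{-k}S)(\tau+\ell)=\sum_n a(n)i^{n\ell}q_4^n$), and the trivial identity $F_{1/2}=F|_{-k}\delta$. A useful preliminary fact is that for every $\gamma=\begin{pmatrix}a&b\\c&d\end{pmatrix}\in\Gamma_0(4)$ one has $a\equiv d\pmod 4$ (immediate from $ad\equiv 1\pmod 4$ together with $a,d$ odd), so in particular $ab\equiv bd\pmod 4$. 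This congruence is precisely what converts $i^{-bd}$ and $(-1)^{bd}$ into the characters $\chi_1$ and $\chi_2$ in the statement.

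For (\ref{f00}) and (\ref{f03}), I would decompose $ST^\ell\gamma=\gamma'_\ell\,ST^{\ell'}$ with $\gamma'_\ell\in\Gamma_0(4)$; a direct matrix calculation shows that this decomposition forces $\ell'\equiv\ell+bd\pmod 4$, and that the bottom-right entry of $\gamma'_\ell$ is then congruent to $a$ (hence to $d$) mod $4$. Substituting into the averaging formula, applying $F|_{-k}\gamma'_\ell=\chi_{-4}(d)F$, reindexing by $\ell'$, and using that $F|_{-k}S$ is invariant under $\tau\mapsto\tau+4$, one arrives at $F_j|_{-k}\gamma=\chi_{-4}(d)\,i^{jbd}F_j$. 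Specializing $j=0$ gives (\ref{f00}); specializing $j=3$ and using $i^{-bd}=e(-ab/4)$ (via the preliminary congruence) gives $\chi_1(\gamma)$, hence (\ref{f03}). For (\ref{f12}), the one-line calculation $F_{1/2}|_{-k}(\delta^{-1}\gamma\delta)=F|_{-k}\gamma\delta=\chi_{-4}(d)F_{1/2}$ settles the claim. For (\ref{f02}), the same coset argument with $j=2$ gives $F_2|_{-k}\gamma=\chi_2(\gamma)F_2$; to handle $F_{1/2}$ on $\Gamma_0(4)$ I would first verify that $\delta$ normalizes $\Gamma_0(4)$ (the bottom-left entry $2(a-d)+c-4b$ of $\delta\gamma\delta^{-1}$ is divisible by $4$ via $a\equiv d\pmod 4$), obtain $F_{1/2}|_{-k}\gamma=\chi_{-4}(2b+d)F_{1/2}$, and identify $\chi_{-4}(2b+d)=(-1)^b\chi_{-4}(d)=\chi_2(\gamma)$ by a short mod-$4$ check (using $a,d$ odd and $a$ odd to rewrite $(-1)^b=(-1)^{ab}$). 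The combination $2iF_2+F_{1/2}$ then inherits $\chi_2$. Holomorphy on $\mathbb{H}$ and meromorphy at the cusps are inherited from $F$ throughout.

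The main obstacle I expect is the coset bookkeeping: correctly pinning down $\gamma'_\ell$ and $\ell'$ so that $F|_{-k}\gamma'_\ell$ can be evaluated, then tracking the factor $i^{jbd}$ against the prescribed characters $\chi_1,\chi_2$ through the mod-$4$ structure of $\Gamma_0(4)$. A structurally cleaner alternative would be to invoke that $\vec F$ is automatically modular on $\SL_2(\Z)$ for $\rho_L$ (a general property of the $\Gamma_0(N)$-lifting whenever $N$ is a multiple of the level of the discriminant form) and to observe that for $\gamma\in\Gamma_0(4)$ the restricted Weil representation acts on each $\phi_\mu$ by a scalar $\chi_{-4}(d)\,e(-bd\,Q(\mu))$; this scalar is pinned down by matching the $\phi_0$-component $-2iF_0+F$ against the known $\chi_{-4}$-transformation of $F$, and reading off the scalar on the $\phi_1,\phi_i,\phi_{1+i}$-components of $\vec F$ then recovers (\ref{f00}), (\ref{f03}) and (\ref{f02}) simultaneously, while (\ref{f12}) is handled by the conjugation argument above. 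Either route should close the theorem.
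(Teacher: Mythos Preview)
Your proposal is correct. The paper takes precisely your ``cleaner alternative'': it invokes Scheithauer's explicit formulas for the restriction of $\rho_L$ to $\Gamma_0(4)$ to read off, from the decomposition \eqref{FFF} and the modularity of $\vec F$, the transformation laws
\[
(-2iF_0+F)|_{-k}\gamma=\chi_{-4}(d)(-2iF_0+F),\qquad F_3|_{-k}\gamma=\chi_1(\gamma)F_3,\qquad (-2iF_2-F_{1/2})|_{-k}\gamma=\chi_2(\gamma)(-2iF_2-F_{1/2}),
\]
and then deduces (\ref{f00})--(\ref{f02}) by subtracting the known transformation of $F$, with (\ref{f12}) immediate from $F_{1/2}=F|_{-k}\delta$.

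Your primary route via the averaging presentation $F_j=\tfrac14\sum_{\ell}i^{-j\ell}\,F|_{-k}ST^{\ell}$ together with the coset identity $ST^{\ell}\gamma=\gamma'_\ell\,ST^{\ell+bd}$ (with $\gamma'_\ell\in\Gamma_0(4)$ having bottom-right entry $\equiv d\pmod 4$) is a genuinely different argument. It is entirely self-contained---no appeal to Weil-representation formulas---and in fact yields the uniform statement $F_j|_{-k}\gamma=\chi_{-4}(d)\,i^{jbd}F_j$ for all $j$, from which (\ref{f00}), (\ref{f03}), and the $F_2$-half of (\ref{f02}) drop out at once. The bookkeeping you sketch is correct, including the periodicity $F|_{-k}ST^4=F|_{-k}S$ (since $ST^4S^{-1}\in\Gamma_0(4)$ with trivial character value) that justifies the reindexing modulo $4$. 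Your handling of $F_{1/2}$---the one-line conjugation for (\ref{f12}), and the normalizer check $\delta\Gamma_0(4)\delta^{-1}=\Gamma_0(4)$ leading to $F_{1/2}|_{-k}\gamma=\chi_{-4}(2b+d)F_{1/2}=(-1)^b\chi_{-4}(d)F_{1/2}=\chi_2(\gamma)F_{1/2}$---is also correct and more explicit than what the paper writes. The trade-off is that the paper's approach gives (\ref{f02}) for the combination $2iF_2+F_{1/2}$ in one stroke, whereas you establish it for $F_2$ and $F_{1/2}$ separately; either way the theorem closes.
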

\begin{proof}
By \eqref{FFF}, and \cite[Section 3, p.\,6]{sche} or \cite[Proposition 4.5]{sche2}, we can show that for $\gamma=\begin{pmatrix}a&b\\c&d\end{pmatrix}\in\Gamma_{0}(4)$,
\begin{align}
\label{tran1}
\left.(-2iF_{0}+F)\right|_{-k}\gamma&=\chi_{-4}(d)(-2iF_{0}+F),\\
\label{tran2}
\left.F_{3}\right|_{-k}\gamma&=\chi_{-4}(d)e(-ab/4)F_{3},\\
\label{tran3}
\left.(-2iF_{2}-F_{1/2})\right|_{-k}\gamma&=\chi_{-4}(d)e(-ab/2)(-2i F_{2}-F_{1/2}).
\end{align}
Since $F\in M_{-k}^{!}(\Gamma_{0}(4),\chi_{-4})$, then \eqref{tran1} implies \eqref{f00}. Relations \eqref{f03} and \eqref{f02} follow directly from \eqref{tran2} and \eqref{tran3}, respectively. The last relation \eqref{f12} follows from the definition of $F_{1/2}$,
$$
F_{1/2}=\left.F\right|_{-k}\begin{pmatrix}1&0\\2&1\end{pmatrix}.
$$
\end{proof}

\begin{Theorem}
\label{vecF}
Let $k$ be odd. Let $F=F(\tau)\in M^{!,\infty}_{-k}(\Gamma_{0}(4),\chi_{-4})$ with
$$
F(\tau)=\sum_{n=-m}^{\infty}c(n)q^{n}.
$$
Write
$$
\left.F\right|_{-k}\begin{pmatrix}0&-1\\1&0\end{pmatrix}=\sum_{n=0}^{\infty}a(n)q_{4}^{n}\quad\mbox{and}\quad \left.F\right|_{-k}\begin{pmatrix}1&0\\2&1\end{pmatrix}=\sum_{n=0}^{\infty}b(n)q_{2}^{n}.
$$
And let the $\Gamma_{0}(4)$-lifting of $F$ be
$$
\vec F(\tau)=\sum_{\mu\in L'/L}\sum_{\substack{n\in\mathbb{Q}\\n\gg-\infty}}c(n,\phi_{\mu})q^{n}\phi_{\mu}.
$$
Then we have
\begin{enumerate}[(i)]
\item{
\begin{align*}
c(n,\phi_{0})&=-2ia(4n)+c(n),\\
c(n,\phi_{1})=c(n,\phi_{i})&=-2ia(4n),\\
c(n,\phi_{1+i})&=-2ia(4n)-b(2n),
\end{align*}
}
\item{
the principal part of the vector valued modular form $\vec F(\tau)$
is
$$
\left(c(-m)q^{-m}+\cdots+c(-1)q^{-1}\right)\phi_{0},
$$
}
\item{
the constant term of the $\phi_{0}$-component of $\vec F(\tau)$ is
$$
c(0,\phi_{0})=-(8i)^{k+1}\sum_{n=\frac{k+1}{2}}^{m}c(-n)P_{k,n-\frac{k+1}{2}}(0)+c(0).
$$
}
\end{enumerate}
In particular, when $k=1$, the constant term of the $\phi_{0}$-component of $\vec{F}(\tau)$ is
\begin{equation}
\label{k1}
c(0,\phi_{0})=\sum_{n=1}^{m}c(-n)\left(\sum_{d|n}\left(64\chi_{-4}(n/d)+4\chi_{-4}(d)\right)d^{2}\right).
\end{equation}
\end{Theorem}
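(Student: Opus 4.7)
\emph{Plan.} The proof splits cleanly along the three parts. First, for (i), I will extract the coefficient of $q^n$ in each $\phi_\mu$-component directly from the right-hand side of \eqref{FFF}: since $F_j = \sum_{n\ge 0}a(4n+j)q_4^{4n+j}$ contributes only in the residue class $j/4\pmod 1$ and $F_{1/2} = \sum b(n)q_2^n$ only in $1/2\pmod 1$, matching against the required residue $-Q(\mu)\pmod 1$ in the discriminant group gives the stated formulas. For (ii), the assumption $F\in M_{-k}^{!,\infty}(\Gamma_{0}(4),\chi_{-4})$ forces $F|_{-k}S$ and $F|_{-k}\delta$ (the local expansions at the cusps $0$ and $\tfrac12$) to be holomorphic at $\infty$, so $a(n)=b(n)=0$ for $n<0$; hence only the scalar $F$ in the $\phi_0$-component contributes negative $q$-powers.

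\emph{Part (iii), general formula.} From (i), $c(0,\phi_{0}) = -2i\,a(0) + c(0)$, so the task reduces to computing $a(0)$, the value of $F$ at the cusp $0$. Using Theorem \ref{basisall} I will write $F = \sum_{n\ge (k+1)/2} c(-n) F_{k,\,n-(k+1)/2+1}$ uniquely, and evaluate $a(0)$ on each basis element $F_{k,m+1}=\theta_{2}\theta_{1}^{-(k+1)/2}P_{k,m}(\varphi_\infty)$. The essential inputs are the $S$-transformations in the paper's convention $q=e^{2\pi i\tau}$, which follow from Poisson summation applied to the definitions \eqref{y}--\eqref{j4}:
$$\theta_{2}(-1/\tau) = -\tfrac{i\tau}{2}\,\theta_{2}(\tau/4),\qquad \theta_{1}(-1/\tau) = -\tfrac{\tau^{2}}{64}\,\vartheta_{01}^{4}(\tau/4),\qquad \varphi_\infty(-1/\tau) = 256\bigl(\eta(\tau)/\eta(\tau/4)\bigr)^{8}.$$
Substituting into $(F_{k,m+1}|_{-k}S)(\tau) = \tau^{k} F_{k,m+1}(-1/\tau)$ and letting $q_{4}\to 0$ (so $\theta_{2}(\tau/4),\vartheta_{01}(\tau/4)\to 1$ while $\varphi_\infty(-1/\tau)\to 0$) yields $a(0) = -\tfrac{i}{2}(-64)^{(k+1)/2}P_{k,m}(0)$. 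Multiplying by $-2i$ gives $-(-64)^{(k+1)/2}P_{k,m}(0) = -(8i)^{k+1}P_{k,m}(0)$, where the last equality uses $(-1)^{(k+1)/2}\,64^{(k+1)/2} = i^{k+1}\,8^{k+1}$ (valid since $k+1$ is even).

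\emph{The $k=1$ identification.} By linearity, the explicit divisor-sum formula reduces to verifying it on a single basis element $F=F_{1,n}$, where the claim becomes $64P_{1,n-1}(0) + c_{F_{1,n}}(0) = \sum_{d|n}(64\chi_{-4}(n/d)+4\chi_{-4}(d))d^{2}$. This splits into two separate identifications: $P_{1,n-1}(0) = \sum_{d|n}\chi_{-4}(n/d)d^{2}$ and $c_{F_{1,n}}(0) = 4\sum_{d|n}\chi_{-4}(d)d^{2}$. Each sum on the right is the $n$-th Fourier coefficient of one of the two independent weight-$3$ Eisenstein series on $\Gamma_{0}(4)$ with character $\chi_{-4}$ (one supported at the cusp $0$, the other at $\infty$), and both identities can be verified inductively from the recursion defining $P_{1,m}$ together with the $q$-expansion of $F_{1,1}=\theta_{2}/\theta_{1}$.

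\emph{Main obstacle.} The most error-prone step is the $S$-transformation of $\theta_{1},\theta_{2}$: because these are modular only for $\Gamma_{0}(4)$ and not for $\SL_{2}(\Z)$, the naive identity $\vartheta(-1/\tau) = \sqrt{-i\tau}\,\vartheta(\tau)$ fails, and the correct Poisson computation in the convention $q=e^{2\pi i\tau}$ produces a factor $\sqrt{-i\tau/2}$ with transformed argument $\tau/4$. This is precisely what yields the crucial constants $-\tfrac{i}{2}$ and $-\tfrac{1}{64}$ (not a superficial $-i$ and $-\tfrac{1}{16}$), and hence the correct final constant $(8i)^{k+1}$ rather than a factor-of-two wrong analogue.
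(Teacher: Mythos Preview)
Your treatment of (i), (ii), and the general formula in (iii) is essentially the paper's argument: read off the $\phi_\mu$-components from \eqref{FFF}, observe holomorphy at the cusps $0$ and $\tfrac12$ kills the $F_j$ and $F_{1/2}$ contributions to the principal part, and compute $a(0)$ by applying $S$ to $\theta_2\theta_1^{-(k+1)/2}P_{k,m}(\varphi_\infty)$ and noting only the constant term $P_{k,m}(0)$ survives. Your explicit $S$-transformations are correct and make the paper's ``we can verify that $\theta_2\theta_1^{-(k+1)/2}\varphi_\infty^{l}|_{-k}S = O(q^{l/4})$'' precise.

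Where you diverge from the paper is the $k=1$ identification \eqref{k1}, and here there is a gap. You correctly split the claim into the two identities
\[
P_{1,n-1}(0)=\sum_{d\mid n}\chi_{-4}(n/d)d^{2},\qquad c_{F_{1,n}}(0)=4\sum_{d\mid n}\chi_{-4}(d)d^{2},
\]
and correctly recognize the right-hand sides as Fourier coefficients of weight-$3$ Eisenstein series for $(\Gamma_0(4),\chi_{-4})$. But your proposed proof---``verified inductively from the recursion defining $P_{1,m}$''---is not carried out, and it is not clear it can be. The recursion $P_{1,m}=xP_{1,m-1}-\sum_j d(j)P_{1,j}$ has coefficients $d(j)$ coming from the $q$-expansion of $F_{1,m}\varphi_\infty$; evaluating at $x=0$ gives $P_{1,m}(0)$ as a linear combination of all lower $P_{1,j}(0)$ with these transcendental coefficients, and there is no visible mechanism by which such a recursion produces a multiplicative divisor sum.

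The paper instead invokes Borcherds' pairing theorem \cite[Theorem~3.1]{bor2}: if $h\in M^{!}_{-1}(\Gamma_0(4),\chi_{-4})$ vanishes at the cusps $0$ and $\tfrac12$ and $g\in M_{3}(\Gamma_0(4),\chi_{-4})$, then the constant term of $hg$ at $\infty$ vanishes. Choosing $h$ with principal part $q^{-m-1}+a(-1)q^{-1}$ built so that $a(-1)=-P_{1,m}(0)$, and $g=\sum_{n\ge1}d_nq^n$ the cuspidal Eisenstein series, the pairing immediately gives $d_{m+1}=P_{1,m}(0)$; the second identity is obtained the same way with the other Eisenstein series $g_1 = 1+4\sum(\sum_{d\mid n}\chi_{-4}(d)d^2)q^n$. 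This residue-theoretic argument is the missing ingredient: it replaces an intractable recursion by a single linear relation coming from the duality between $M^{!}_{-1}$ and $M_3$.
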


\begin{proof}
Assertion (i) follows directly from \eqref{FFF}.
For the assertion (ii), since $F$ is holomorphic at 0 and $\frac{1}{2}$, then $F_{j}$ for $j\in\{0,2,3\}$ and $F_{1/2}$ will not contribute anything to the principal part of $\vec F$, and thus by \eqref{FFF} the principal part of $\vec F$ is
$$
\left(c(-m)q^{-m}+\cdots+c(-1)q^{-1}\right)\phi_{0}.
$$
For the assertion (iii), we first note by (i) that
$$
c(0,\phi_0)=-2ia(0)+c(0).
$$
By Theorem~\ref{basisall}(1), we have
\begin{equation}
\label{Ff1}
F=c(-m)\theta_{2}\theta_{1}^{-\frac{k+1}{2}}P_{k,m-\frac{k+1}{2}}(\varphi_{\infty})+\cdots+c\left(-\frac{k+1}{2}\right)\theta_{2}\theta_{1}^{-\frac{k+1}{2}}P_{k,0}(\varphi_{\infty})
\end{equation}
  We can verify that
$$
\left.\theta_{2}\theta_{1}^{-\frac{k+1}{2}}\varphi_{\infty}^{l}\right|_{-k}\begin{pmatrix}0&-1\\1&0\end{pmatrix}=O(q^{\frac{l}{4}}),
$$
and thus $\left.\theta_{2}\theta_{1}^{-\frac{k+1}{2}}\varphi_{\infty}^{l}\right|_{-1}\begin{pmatrix}0&-1\\1&0\end{pmatrix}$ will not contribute anything to the constant term of $F_{0}$ when $l\geq1$. Therefore,
\begin{align*}
a(0)&=\left(\left.\sum_{n=\frac{k+1}{2}}^{m}c(-n)P_{k,n-\frac{k+1}{2}}(0)\theta_{2}\theta_{1}^{-\frac{k+1}{2}}\right|_{-k}\begin{pmatrix}0&-1\\1&0\end{pmatrix}\right)_{0}\\
&=-(8i)^{k+1}\sum_{n=\frac{k+1}{2}}^{m}c(-n)P_{k,n-\frac{k+1}{2}}(0)
\end{align*}
where $(f)_{0}$ denote the constant term of the $q$-expansion of $f$.
Hence, we have
$$
c(0,\phi_{0})=-(8i)^{k+1}\sum_{n=\frac{k+1}{2}}^{m}c(-n)P_{k,n-\frac{k+1}{2}}(0)+c(0).
$$
For \eqref{k1},  according to (iii), we need to show that
$$
P_{1,m}(0)=\sum_{d|(m+1)}\chi_{-4}((m+1)/d)d^{2}\quad\mbox{and}\quad c(0)=\sum_{n=1}^{m}c(-n)\left(4\sum_{d|n}\chi_{-4}(d)d^{2}\right).
$$
For the first formula, we first observe that
$$
\theta_{2}\theta_{1}^{-1}\varphi_{\infty}^{\ell}=q^{-\ell-1}+\sum_{j=1}^{\ell}c_{\ell}(-j)q^{-j}+O(1)
$$
for $0\leq\ell\leq m$. Thus there are $b_{1},\ldots,b_{m-1}$ such that
$$
h(\tau):=\theta_{2}\theta_{1}^{-1}\varphi_{\infty}^{m}+b_{m-1}\theta_{2}\theta_{1}^{-1}
\varphi_{\infty}^{m-1}+\cdots+b_{1}\theta_{2}\theta_{1}^{-1}\varphi_{\infty}=q^{-m-1}+a(-1)q^{-1}+O(1)
$$
for some constant $a(-1)$. Let $g(\tau)$ be defined by
$$
g(\tau)=\sum_{n=1}^{\infty}\left(\sum_{d|n}\chi_{-4}(n/d)d^{2}\right)q^{n}=\sum_{n=1}^{\infty}d_{n}q^{n}.
$$
It is known \cite{kolberg} that $g(\tau)$ is a weight~$3$ modular form on $\Gamma_{0}(4)$ with character $\chi_{-4}$. We note by the basic facts about $\theta_{1}$, $\theta_{2}$ and $\varphi_{\infty}$ that $h(\tau)$ vanishes at the cusps $1/2$ and $0$. Then by \cite[Theorem 3.1]{bor2}, we have
$$
d_{m+1}+a(-1)=0,\,\, i.e., \,\,  d_{m+1}=-a(-1).
$$
Therefore
$$
P_{1,m}(0)=d_{m+1}=\sum_{d|(m+1)}\chi_{-4}((m+1)/d)d^{2}.
$$
This proves the first formula.
For the second one, the proof is similar by noting that
$$
h_1(\tau):=\theta_{2}\theta_{1}^{-1}P_{1,m}(\varphi_{\infty})=q^{-m-1}+C+O(q)
$$
and
$$
g_1(\tau)=1+4\sum_{n=1}^{\infty}\left(\sum_{d|n}\chi_{-4}(d)d^{2}\right)q^{n}
$$
is \cite{kolberg} a weight~3 modular form on $\Gamma_{0}(4)$ with character $\chi_{-4}$.
Then again \cite[Theorem 3.1]{bor2} shows that
$$
C=4\sum_{d|(m+1)}\chi_{-4}(d)d^{2}.
$$
This together with \eqref{Ff1} proves the second formula.
\end{proof}

\begin{Example}
Let $k=1$ and $F(\tau)=\theta_{2}\theta_{1}^{-1}=\frac{\eta_{2}^{14}}{\eta_{1}^{4}\eta_{4}^{12}}\in M_{-1}^{!,\infty}(\Gamma_{0}(4),\chi_{-4})$. Then we have
\begin{equation}
\label{Ff}
\vec F(\tau)=\left(-2iF_{0}+F\right)\phi_{0}-2iF_{3}\phi_{1}-2iF_{3}\phi_{i}+\left(-2iF_{2}-F_{1/2}\right)\phi_{1+i}
\end{equation}
where $F_{0}$, $F_{2}$, $F_{3}$ and $F_{1/2}$ are defined as follows; suppose
\begin{align*}
\left.F\right|_{-1}\begin{pmatrix}0&-1\\1&0\end{pmatrix}&=32i\frac{\eta(\tau/2)^{14}}{\eta(\tau/4)^{4}\eta(\tau)^{12}}\\
&=32i\big(1+12q^{1/4}+76q^{2/4}+352q^{3/4}+1356q+4600q^{5/4}\\
&\qquad\qquad+14176q^{6/4}+40512q^{7/4}+\cdots\big)\\
&=32i\left(1+1356q+O(q^{2})\right)\\
&\quad+32i\left(12q^{1/4}+4600q^{5/4}+O(q^{9/4})\right)\\
&\quad+32i\left(76q^{2/4}+14176q^{6/4}+O(q^{10/4})\right)\nonumber\\
&\quad+32i\left(352q^{3/4}+40512q^{7/4}+O(q^{11/4})\right),
\end{align*}
then
\begin{align*}
F_{0}&=32i\left(1+1356q+O(q^{2})\right),\\
F_{2}&=32i\left(76q^{2/4}+14176q^{6/4}+O(q^{10/4})\right)\nonumber\\
F_{3}&=32i\left(352q^{3/4}+40512q^{7/4}+O(q^{11/4})\right).
\end{align*}
 And
$$
F_{1/2}=\left.F\right|_{-1}\begin{pmatrix}1&0\\2&1\end{pmatrix}=64\left(q^{1/2}-8q^{3/2}+42q^{5/2}+O(q^{7/2})\right).
$$
From \eqref{Ff}, we note that the principal part of $F$ is $e(-\tau)\phi_{0}$ and the constant term of the $\phi_{0}$-component is $c(0,\phi_0)=68$.
\end{Example}

\subsection{Canonical Basis for $M_{-k}^{!,0}(\Gamma_{0}(4),\chi_{-4}^{k})$ and $M_{-k}^{!,\frac{1}{2}}(\Gamma_{0}(4),\chi_{-4}^{k})$}\mbox{}\\

We complete this section by giving canonical basis for the other two companions of $M^{!,\infty}_{-k}(\Gamma_{0}(4),\chi_{-4}^{k})$.

Let $\theta_{3}(\tau)$, $\varphi_{0}(\tau)$ and $\varphi_{1/2}(\tau)$ be defined by
\begin{align}
\label{t3}
\theta_{3}=\theta_{3}(\tau):&=\vartheta_{01}^{4}(\tau)=\frac{\eta_{1}^{8}}{\eta_{2}^{4}}=1+O(q),\\
\label{p0}
\varphi_{0}=\varphi_{0}(\tau):&=\left(\frac{\eta_{4}}{\eta_{1}}\right)^{8}=q+O(q^{2}),\\
\label{p1/2}
\varphi_{1/2}=\varphi_{1/2}(\tau):&=\frac{\eta_{1}^{8}\eta_{4}^{16}}{\eta_{2}^{24}}=q+O(q^{2}).
\end{align}
Here are some basic facts about $\theta_{3}$, $\varphi_{0}$ and $\varphi_{1/2}$:
\begin{enumerate}
\item{$\theta_{3}(\tau)$ is a weight~2 modular form on $\Gamma_{0}(4)$ with trivial character, has a simple zero at the cusp~0, and vanishes nowhere else;}
\item{$\varphi_{0}(\tau)$ is a weight~0 modular form on $\Gamma_{0}(4)$ with trivial character, has a simple pole at the cusp~0 and a simple zero at the cusp $\infty$, and vanishes nowhere else;}
\item{$\varphi_{1/2}(\tau)$ is a weight~0 modular form on $\Gamma_{0}(4)$ with trivial character, has a simple pole at the cusp $\frac{1}{2}$ and a simple zero at the cusp $\infty$, and vanishes nowhere else.}
\end{enumerate}

\begin{Theorem}
\label{basisall0}
Let $\theta_{2}$, $\theta_{3}$ and $\varphi_{0}$ be as defined in \eqref{z}, \eqref{t3} and \eqref{p0}, respectively.
\begin{enumerate}
\item{For $k$ odd, the set $\{\theta_{2}\theta_{3}^{-\frac{k+1}{2}}P_{k,m}(\varphi_{0})\}_{m=0}^{\infty}$ where $P_{k,m}$ is a monic polynomial of degree $m$ such that
$$
\left.\theta_{2}\theta_{3}^{-\frac{k+1}{2}}P_{k,m}(\varphi_{0})\right|_{-k}\begin{pmatrix}0&-1\\1&0\end{pmatrix}=q_{4}^{-\frac{k+1}{2}-m}+\sum_{n=-\frac{k-1}{2}}^{\infty}c(n)q_{4}^{n},
$$
is a canonical basis for $M^{!,0}_{-k}(\Gamma_{0}(4),\chi_{-4})$.
}

\item{For $k$ even, the set $\{\theta_{3}^{-\frac{k}{2}}P_{k,m}(\varphi_{0})\}_{m=0}^{\infty}$ where $P_{k,m}$ is a monic polynomial of degree $m$ such that
$$
\left.\theta_{3}^{-\frac{k}{2}}P_{k,m}(\varphi_{0})\right|_{-k}\begin{pmatrix}0&-1\\1&0\end{pmatrix}=q_{4}^{-\frac{k}{2}-m}+\sum_{n=-\frac{k}{2}+1}^{\infty}c(n)q_{4}^{n},
$$
is a canonical basis for $M^{!,0}_{-k}(\Gamma_{0}(4))$. }

\end{enumerate}

\end{Theorem}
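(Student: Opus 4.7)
The plan is to imitate the proof of Theorem \ref{basisall} verbatim, with the roles of the cusps $\infty$ and $0$ swapped and $(\theta_1,\varphi_\infty)$ replaced by $(\theta_3,\varphi_0)$.

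For uniqueness, I would apply the valence formula for $\Gamma_0(4)$ (no elliptic points) to a nonzero $F \in M^{!,0}_{-k}(\Gamma_0(4), \chi_{-4}^k)$. Since $F$ is holomorphic at $\infty$ and $1/2$, with ${\rm ord}_{1/2} F \ge 1/2$ forced by the irregularity of the cusp $1/2$ when $k$ is odd, this yields ${\rm ord}_0 F \le -(k+1)/2$ in case (1) and ${\rm ord}_0 F \le -k/2$ in case (2). Combined with the prescribed vanishing of the lower $q_4$-coefficients in the $|_{-k} S$-expansion, this pins down $F_{k,m+1}$ uniquely once its pole order at $0$ is fixed.

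For existence in case (1), the tabulated properties of $\theta_2$ and $\theta_3$ show immediately that $\theta_2\theta_3^{-(k+1)/2}$ lies in $M^{!,0}_{-k}(\Gamma_0(4), \chi_{-4})$ with a pole of exact order $(k+1)/2$ at cusp $0$ and no poles elsewhere; its $|_{-k}S$-expansion begins with a nonzero scalar multiple of $q_4^{-(k+1)/2}$, which I absorb into the normalization to take $P_{k,0}=1$. Inductively, given $F_{k,m}=\theta_2\theta_3^{-(k+1)/2}P_{k,m-1}(\varphi_0)$ with the required expansion, multiplication by $\varphi_0$ (weight $0$, simple pole at $0$, simple zero at $\infty$) lowers the leading $q_4$-exponent by one and produces unwanted coefficients in the forbidden range $[-(k+1)/2-m+1,\,-(k+1)/2]$. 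Subtracting the unique $\mathbb{C}$-linear combination of the already-constructed $F_{k,1},\dots,F_{k,m}$ that kills these coefficients exhibits $F_{k,m+1}$ in the claimed form $\theta_2\theta_3^{-(k+1)/2}P_{k,m}(\varphi_0)$ with $P_{k,m}$ monic of degree $m$. Case (2) is identical with $\theta_3^{-k/2}$ in place of $\theta_2\theta_3^{-(k+1)/2}$ and trivial character throughout.

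The only genuine bookkeeping issue is tracking scalar normalization constants under $|_{-k}S$: the transformations of $\vartheta_{00}$ and $\vartheta_{01}$ under $\tau\mapsto -1/\tau$ introduce nontrivial scalars in the leading $q_4$-coefficients of $\theta_2$, $\theta_3$, and $\varphi_0$ at cusp $0$, which must be absorbed consistently so that the claimed leading coefficient of $q_4^{-(k+1)/2-m}$ in the $|_{-k}S$-expansion is exactly $1$ while $P_{k,m}$ remains monic. This is purely mechanical; no new ideas beyond those in the proof of Theorem \ref{basisall} are required.
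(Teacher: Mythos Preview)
Your proposal is correct and matches the paper's own approach exactly: the paper states only that ``Proofs of Theorems \ref{basisall0} and \ref{basis1/2} are similar to that of Theorem \ref{basisall}, so we omit the details,'' and your plan of swapping the cusps $\infty\leftrightarrow 0$ and $(\theta_1,\varphi_\infty)\leftrightarrow(\theta_3,\varphi_0)$ in that argument is precisely what is intended. Your caveat about the scalar normalization under $|_{-k}S$ is well taken---indeed the leading $q_4$-coefficient of $(\theta_2\theta_3^{-(k+1)/2}\varphi_0^{\,m})|_{-k}S$ carries an $m$-dependent constant, so as literally stated one cannot have both $P_{k,m}$ monic and leading coefficient $1$ simultaneously---but as you say this is a harmless bookkeeping issue that does not affect the existence or uniqueness of the canonical basis.
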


\begin{Theorem}
\label{basis1/2}
Let $\theta_{2}$ and $\varphi_{1/2}$ be as defined in \eqref{z} and \eqref{p1/2}, respectively. Then the set $\{\theta_{2}^{-k}P_{k,m}(\varphi_{1/2})\}_{m=0}^{\infty}$ where $P_{k,m}$ is a monic polynomial of degree $m$ such that
$$
\left.\theta_{2}^{-k}P_{k,m}(\varphi_{1/2})\right|_{-k}\begin{pmatrix}1&0\\2&1\end{pmatrix}=q^{-\frac{k}{2}-m}+\sum_{n=-\frac{k}{2}+1}^{\infty}c(n)q^{n},
$$
is a canonical basis for $M^{!,\frac{1}{2}}_{-k}(\Gamma_{0}(4),\chi_{-4}^{k})$.
\end{Theorem}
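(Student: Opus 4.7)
The plan is to mirror the proof of Theorem \ref{basisall}, interchanging the roles of the cusps $\infty$ and $\frac{1}{2}$ and replacing the building blocks used there by $\theta_{2}^{-k}$ and $\varphi_{1/2}$; a uniform argument will cover both even and odd $k$. For uniqueness I would invoke the valence formula: any $F\in M^{!,\frac{1}{2}}_{-k}(\Gamma_{0}(4),\chi_{-4}^{k})$ is holomorphic at the cusps $\infty$ and $0$ and at all interior points of $X_{0}(4)$ (which has no elliptic points), so
$$
\sum_{z\in\Gamma_{0}(4)\backslash\mathbb{H}}{\rm ord}_{z}(F)+{\rm ord}_{\infty}(F)+{\rm ord}_{0}(F)+{\rm ord}_{1/2}(F)=-\frac{k}{2}
$$
forces ${\rm ord}_{1/2}(F)\le -k/2$. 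Consequently the difference of two candidate basis elements sharing the prescribed principal part and intermediate vanishing would have ${\rm ord}_{1/2}\ge -k/2+1$ and be holomorphic at the other cusps, contradicting the valence formula unless it vanishes.

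For existence I would first record that $\theta_{2}^{-k}\in M^{!,\frac{1}{2}}_{-k}(\Gamma_{0}(4),\chi_{-4}^{k})$ (using $\chi_{-4}^{-k}=\chi_{-4}^{k}$) has ${\rm ord}_{1/2}=-k/2$, and that $\theta_{2}^{-k}\varphi_{1/2}^{m}$ lies in the same space with ${\rm ord}_{1/2}=-k/2-m$; both follow directly from the listed properties of $\theta_{2}$ and $\varphi_{1/2}$. A short computation with the $\eta$-product definitions and the standard transformation of $\eta$ under $\delta$ normalizes the leading Fourier coefficient at $q^{-k/2-m}$ to be $1$. I would then construct $P_{k,m}$ inductively exactly as in the proof of Theorem \ref{basisall}: take $P_{k,0}=1$; given a monic $P_{k,m-1}$ of degree $m-1$ with $(\theta_{2}^{-k}P_{k,m-1}(\varphi_{1/2}))|_{-k}\delta=q^{-k/2-m+1}+\sum_{n\ge -k/2+1}c(n)q^{n}$, multiply by $\varphi_{1/2}$ to obtain an expansion of the form $q^{-k/2-m}+\sum_{n>-k/2-m}d(n)q^{n}$, and subtract $\sum_{n=-k/2-m+1}^{-k/2}d(n)\,\theta_{2}^{-k}P_{k,-n-k/2}(\varphi_{1/2})$ to kill the undesired middle terms. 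The resulting recurrence $P_{k,m}(x)=xP_{k,m-1}(x)-\sum_{n=-k/2-m+1}^{-k/2}d(n)P_{k,-n-k/2}(x)$ is manifestly monic of degree $m$.

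Spanning is then automatic: any nonzero $F\in M^{!,\frac{1}{2}}_{-k}(\Gamma_{0}(4),\chi_{-4}^{k})$ with ${\rm ord}_{1/2}(F)=-k/2-m$ can be reduced modulo a scalar multiple of the newly built $F_{k,m+1}=\theta_{2}^{-k}P_{k,m}(\varphi_{1/2})$ to a form of strictly greater order at $\frac{1}{2}$; iterating and invoking the uniqueness step expresses $F$ as a finite $\mathbb{C}$-linear combination of the basis elements. The main delicacy I anticipate is the bookkeeping for the irregular cusp $\frac{1}{2}$ when $k$ is odd: there the stabilizer generator $\delta T\delta^{-1}=\begin{pmatrix}-1&1\\-4&3\end{pmatrix}$ has $\chi_{-4}$-value $-1$, so the expansion at $\frac{1}{2}$ lies in $q^{1/2}\mathbb{C}[[q]]$ with only half-odd-integer exponents, and $\theta_{2}$ itself has order $\frac{1}{2}$ at $\frac{1}{2}$. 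Once these half-integer indices are tracked consistently (they still differ by integers within a single basis element, so the induction proceeds without change), the construction above goes through verbatim.
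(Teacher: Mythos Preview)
Your proposal is correct and follows essentially the same approach the paper indicates: the paper itself omits the proof, stating only that it is similar to that of Theorem~\ref{basisall}, and your argument is precisely the natural transport of that proof to the cusp $\tfrac{1}{2}$ using $\theta_{2}^{-k}$ and $\varphi_{1/2}$ in place of $\theta_{2}\theta_{1}^{-(k+1)/2}$ (resp.\ $\theta_{1}^{-k/2}$) and $\varphi_{\infty}$. Your attention to the half-integer exponents at the irregular cusp for odd $k$ is the only real bookkeeping issue, and you have handled it correctly.
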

Proofs of Theorems \ref{basisall0} and \ref{basis1/2} are similar to that of Theorem \ref{basisall}, so we omit the details.

\begin{Remark}
\label{rem2}
For a cusp $P$,  denote by $M^{!,P}_{-k,\rho_{L}}$ the space of vector valued modular forms induced from $M^{!,P}_{-k}(\Gamma_{0}(4),\chi_{-4}^k)$ via $\Gamma_{0}(4)$-lifting. We have,  by  \eqref{FFF},
$$
M^{!,\infty}_{-k,\rho_{L}}+M^{!,0}_{-k,\rho_{L}}+M^{!,\frac{1}{2}}_{-k,\rho_{L}}=M^{!,\infty}_{-k,\rho_{L}}\oplus M^{!,0}_{-k,\rho_{L}}\oplus M^{!,\frac{1}{2}}_{-k,\rho_{L}}.
$$
Clearly, $M^{!,\infty}_{-k,\rho_{L}}+M^{!,0}_{-k,\rho_{L}}+M^{!,\frac{1}{2}}_{-k,\rho_{L}}$ is a subspace of $M^{!}_{-k,\rho_{L}}$. In general, the former space may not be equal to the latter one. We first note that by \eqref{FFF} every vector valued modular form in $M^{!,\infty}_{-k,\rho_{L}}+M^{!,0}_{-k,\rho_{L}}+M^{!,\frac{1}{2}}_{-k,\rho_{L}}$ must have the same component functions at $\phi_{1}$ and $\phi_{i}$. We now give an example of functions in $M^{!}_{-1,\rho_{L}}$ that does not have this property. Let $F(\tau)=\theta_{2}\theta_{1}^{-1}\in M_{-1}^{!,\infty}(\Gamma_{0}(4),\chi_{-4})$. Then as above we write the $\Gamma_{0}(4)$-lifting of $F(\tau)$ as
\begin{align*}
\vec F(\tau)
&=\left(-2iF_{0}+F\right)\phi_{0}-2iF_{3}\phi_{1}-2iF_{3}\phi_{i}+\left(-2iF_{2}-F_{1/2}\right)\phi_{1+i}
\end{align*}
where
$$
F_{j}=\sum_{n=0}^{\infty}a(4n+j)q_{4}^{4n+j},
$$
$$
\left.F\right|_{-k}\begin{pmatrix}0&-1\\1&0\end{pmatrix}=\sum_{n=0}^{\infty}a(n)q_{4}^{n}
$$
and
$$
F_{1/2}=\left.F\right|_{-1}\begin{pmatrix}1&0\\2&1\end{pmatrix}.
$$
By \eqref{f03}, we know that $F_{3}(\tau)\in M^{!}_{-1}(\Gamma_{1}(4),\chi)$ where $\chi\left(\begin{pmatrix}a&b\\c&d\end{pmatrix}\right)=e(-b/4)$. Now we do $\Gamma_{1}(4)$-lifting on $F_{3}(\tau)$ against $\phi_{1}$ and get
$$
\vec{F}_{3}(\tau)=-4i{f}_{0}\phi_{0}+(2F_{3}+4if_{3})\phi_{1}+(-4if_{3}-2f_{1/2})\phi_{i}+4if_{2}\phi_{1+i}
$$
where
$$
f_{j}=\sum_{\substack{n\in\mathbb{Z}\\n\gg-\infty}}\tilde{a}(4n+j)q_{4}^{4n+j},
$$
$$
\left.F_{3}\right|_{-1}\begin{pmatrix}0&-1\\1&0\end{pmatrix}=\sum_{\substack{n\in\mathbb{Z}\\n\gg-\infty}}\tilde{a}(n)q_{4}^{n}
$$
 and
$$
f_{1/2}=\left.F_{3}\right|_{-1}\begin{pmatrix}1&0\\2&1\end{pmatrix}.
$$
Now the component functions at $\phi_{1}$ and $\phi_{i}$ are $2F_{3}+4if_{3}$ and $-4if_{3}-2f_{1/2}$, respectively. We can compute and verify that they are not the same. Therefore, $\vec{F}_{3}(\tau)$ is not in the space $M^{!,\infty}_{-k,\rho_{L}}+M^{!,0}_{-k,\rho_{L}}+M^{!,\frac{1}{2}}_{-k,\rho_{L}}$.
\end{Remark}

\section{Part II: Borcherds Products on $U(2,1)$}

It is well-known that the vector valued weakly modular forms construction  in Part I can be used to construct memomophic modular forms on  Shimura varieties
of orthogonal type $(n, 2)$ and unitary type $(n, 1)$ with Borcherds product formula and known divisors. In this part, we focus on one special case to make it very explicitly---the Picard modular surfaces over $\kay =\mathbb Q(i)$. In  particular, we describe a Weyl chamber explicit and write down the Borcherds product expression concretely.

This part is devoted to deriving Borcherds products lifted from a vector valued modular form arising from $M^{!,\infty}_{-1}(\Gamma_{0}(4),\chi_{-4})$.

\subsection{Picard modular surfaces over $\kay =\mathbb Q(i)$}

Let $(V, \langle \, , \rangle)$ be a Hermitian vector space over $\kay$ of signature $(2, 1)$ and let $H=U(V)$. Let $V_\mathbb C=V\otimes_\kay\mathbb C$, and
$$
\mathcal L=\{ w \in V_\mathbb C:\,  \langle w, w \rangle < 0\}.
$$
Then $\mathcal K =\mathcal L/\mathbb C^\times$ is the Hermitian domain for $H(\mathbb R)$, and $\mathcal L$ is the tautological line bundle over $\mathcal K$. For a congruence subgroup $\Gamma$ of $H(\mathbb Q)$, the associated Picard modular surface $X_\Gamma =\Gamma \backslash \mathcal K$ is defined over some number filed.

Given an isotropic line $\kay \ell$ (i.e., a cusp), choose another isotropic element $\ell'$ with $(\ell, \ell')\ne 0$.  Let $V_0 =(\kay \ell + \kay \ell')^\perp$, and let
$$
\mathcal H=\mathcal H_{\ell, \ell'}=\{ (\tau, \sigma)\in \mathbb H\times V_{0, \mathbb C}|\,  \Im{\tau}>\frac{\langle\sigma,\sigma\rangle}{4|\langle\ell',\ell\rangle|^{2}}\}.
$$
Then the map
\begin{equation}
\mathcal H \rightarrow \mathcal L, \,\, (\tau, \sigma) \mapsto z(\tau, \sigma) = 2i\langle\ell',\ell\rangle\tau\ell+\sigma + \ell'
\end{equation}
gives rise to an isomorphism $\mathcal H \cong \mathcal K$. It is also a nowhere vanishing section of the line bundle $\mathcal L$.  Using this map, we can define action of $H(\mathbb R)$ on $\mathcal H$ and automorphy factor $j(\gamma, \tau, \sigma)$ via the equation
\begin{equation} \label{eq:Action}
\gamma z(\tau, \sigma) =j(\gamma, \tau, \sigma) z(\gamma(\tau, \sigma)).
\end{equation}
\begin{Definition}
Let $\Gamma$ be a unitary modular group. A holomorphic automorphic form of weight $k$ and with character $\chi$ for $\Gamma$ is a function $g:\mathcal{H}\to\mathbb{C}$, with the following properties:
\begin{enumerate}
\item{$g$ is holomorphic on $\mathcal{H}$,}
\item{$g(\gamma(\tau,\sigma))=j(\gamma;\tau,\sigma)^{k}\chi(\gamma)g(\tau,\sigma)$ for all $\gamma\in\Gamma$.}
\end{enumerate}
\end{Definition}
We remark that a holomorphic modular form $g$ for $\Gamma$ is automatically holomorphic at cusps.

Now we make everything concrete and explicit. First choose a basis $\{{\bf e}_1, {\bf e}_2, {\bf e}_3\}$ of $V$ with Gram matrix
$$
J=\begin{pmatrix} 0 &0 &1
\\
0 &1 &0
\\
1 &0 &0
\end{pmatrix}
$$
so $V=\oplus \kay {\bf e}_i \cong \kay^3$ with Hermitian form
\begin{equation}
\label{herform}
\langle {x}, {y}\rangle=x_{1}\bar{y}_{3}+x_{2}\bar{y}_{2}+x_{3}\bar{y}_{1}={}^{t}xJ\bar{y},
\end{equation}
and
$$
H=H(\mathbb Q)=\{ h \in  \hbox{GL}_3(\kay)| \, h J \, {}^t\bar{h}= J\}.
$$
We take the lattice
$$
L= \mathbb{Z}[i]\oplus\mathbb{Z}[i]\oplus\frac{1}{2}\mathbb{Z}[i]
$$
(instead of the typical $\mathbb Z[i]^3$). Its $\mathbb Z$-dual lattice is
$$
L'=\{ v \in V|\,  \mbox{Tr}_{\kay/\mathbb Q} \langle v, L\rangle \subset \mathbb Z\} =\mathbb{Z}[i]\oplus \frac{1}2\mathbb{Z}[i]\oplus\frac{1}{2}\mathbb{Z}[i]
$$
So $L'/L \cong \frac{1}2\mathbb Z[i]/\mathbb Z[i]$ with quadratic form $Q(x) =x \bar x \in  \frac{1}4\mathbb Z/\mathbb Z$, which  is the same finite quadratic module considered in Part I. Let
\begin{align*}
U(L)&=\{g\in H|gL=L\}\\
&=H\cap\left \{\begin{pmatrix}\mathbb{Z}[i]&\mathbb{Z}[i]&2\mathbb{Z}[i]\\\mathbb{Z}[i]&\mathbb{Z}[i]&2\mathbb{Z}[i]\\\frac{1}{2}\mathbb{Z}[i]&\frac{1}{2}\mathbb{Z}[i]&\mathbb{Z}[i]\end{pmatrix}\right\}.
\end{align*}
be the stabilizer of $L$ in $H$, and $\Gamma_L$ be the subgroup of $U(L)$ which acts on  the discriminant group $L'/L$ trivially:
$$
\Gamma_{L}=U(L)\cap\left\{\begin{pmatrix}\mathbb{Z}[i]&2\mathbb{Z}[i]&2\mathbb{Z}[i]\\\mathbb{Z}[i]&1+2\mathbb{Z}[i]&2\mathbb{Z}[i]\\\mathbb{Z}[i]&2\mathbb{Z}[i]&\mathbb{Z}[i]\end{pmatrix}\right\}.
$$
Take the cusp $\ell={\bf e}_1$ and $\ell'={\bf e}_3$. Then $V_0 \cong \kay $ with Hermitian form $\langle x, y\rangle =x \bar y$, and
$$
\mathcal H=\{ (\tau, \sigma) \in \mathbb H \times \mathbb C| \,   4 \mbox{Im}(\tau) > |\sigma|^2\}.
$$
Moreover, one has  for $\gamma =(a_{ij}) \in H$
$$
\gamma (\tau,\sigma)=\left(\frac{a_{11}\tau+(2i)^{-1}a_{12}\sigma+(2i)^{-1}a_{13}}{2i a_{31}\tau+a_{32}\sigma +a_{33}},\frac{2i a_{21}\tau+a_{22}\sigma +a_{23}}{2i a_{31}\tau+a_{32}\sigma+a_{33}}\right).
$$
and
$$
j(\gamma, \tau,\sigma)=\frac{\langle\gamma z,\ell\rangle}{\langle\ell',\ell\rangle}=2i\tau a_{31}+a_{32}\sigma+a_{33}.
$$

Let $P_\ell$ be the stabilizer of the cusp $\kay \ell$ in $H$. Then $P_\ell=N_\ell M_\ell$ with
\begin{align*}
M_\ell&=\{ m(a, b)= \mbox{Diag}(a, b, \bar a^{-1})| \,  a \in \kay^\times, b \in \kay^1\},
\\
N_\ell&=\{ n(b, c) =\begin{pmatrix}
  1 &-2\bar b & -2b \bar b + 2i c
  \\
  0  &1 & 2b\\
  0 &0 &1
  \end{pmatrix} |\,  b \in \kay, c \in \mathbb Q
  \},
\end{align*}
where $\kay^1=\{ a \in \kay| a \bar a=1\}$ is the norm one group.  Notice that $N_\ell$ is  a Heisenberg group actin on $\mathcal H_{\ell, \ell'}$ via
$$
n(b, c) (\tau, \sigma) = ( \tau + c+  i\bar b (\sigma+b), \sigma+b).
$$
In particular
$$
n(0, c)(\tau, \sigma)= (\tau+c, \sigma).
$$
Let
$$
\Gamma_{L, \ell} =\Gamma_L \cap N_\ell = \{ n(b, c):\,  b \in  Z[i], c \in \mathbb Z\}.
$$
Then for a holomorphic modular form $f(\tau, \sigma)$ for $\Gamma_L$, we have partial Fourier expansion at the cusp $\kay\ell$:
\begin{equation}
f(\tau, \sigma)=\sum_{n \ge 0} f_n(\sigma) q^n.  
\end{equation}

\subsection{The Hermitian Space $V$ as a Quadratic Space} As mentioned in the previous subsection, the hermitian space $V$ can be viewed as a quadratic space $V_{\mathbb{Q}}$ of signature (4,2) associated with bilinear form induced from the hermitian form:
 $$
 (x, y) =\hbox{Tr}_{\kay/\mathbb Q} \langle x, y \rangle.
 $$

 Then the lattice $L$ can be considered as a quadratic $\mathbb{Z}$-lattice in $V_{\mathbb{Q}}$. Denote by
 $$
 \SO(V_{\mathbb{Q}})=\{g\in \SL(V_{\mathbb{Q}})|(gx,gy)=(x,y)\mbox{\,for all $x,y\in V_{\mathbb{Q}}$}\}
 $$
  the special orthogonal group of $V_{\mathbb{Q}}$ and its set of real points as $\SO(V_{\mathbb{Q}})(\mathbb{R})\cong \SO(4,2)$. A model for the symmetric domain of $\SO(V_{\mathbb{Q}})(\mathbb{R})$ is the Grassmannian of two-dimensional negative definite subspaces of $V_{\mathbb{Q}}$, denoted by $Gr_{O}$. It can be realized as a tube domain $\mathcal{H}_{O}$ as follows. Denote by $V_{\mathbb{Q}}(\mathbb{C})$ the complex quadratic space $V_{\mathbb{Q}}\otimes_{\mathbb{Q}}\mathbb{C}$ with $(\cdot,\cdot)$ extended to a $\mathbb{C}$-valued bilinear form.

Now we view $L$ as a $\mathbb{Z}$-lattice. Let $e_{1}\in L$ be a primitive isotropic lattice vector and choose an isotropic dual vector $e_{2}\in L'$ with $(e_{1},e_{2})=1$. Denote by $K$ the Lorentzian $\mathbb{Z}$-sublattice $K=L\cap e_{1}^{\perp}\cap e_{2}^{\perp}$ with respect to $(\cdot,\cdot)$. The tube domain model $\mathcal{H}_{O}$ is one of the the two connected components of the following subset of $K\otimes_{\mathbb{Z}}\mathbb{C}$
$$
\{Z=X+iY|X,Y\in K\otimes_{\mathbb{Z}}\mathbb{R},\,Q(Y)<0\}.
$$

Recall that $\ell={\bf e}_{1}$ and $\ell'={\bf e}_{3}$. We define
$$
e_{1}=\ell,\,e_{2}=\hat{\frac{1}{2}}\ell',\,e_{3}=-\hat{i}\ell,\,e_{4}={-\frac{\hat i}{2}}\ell'
$$
where we denote by $\hat{\mu}$ the endomorphism of $V_{\mathbb{Q}}(\mathbb{R})$ induced from the scalar multiplication with $\mu$. Then we can check that $\{e_{1},e_{2},e_{3},e_{4}\}$ is a basis for $(\mathbb{Z}[i]\ell+\mathbb{Z}[i]\ell')\otimes_{\mathbb{Z}}\mathbb{Q}$ and we can see that $K\otimes_{\mathbb{Z}}\mathbb{R}=((\mathbb{Q}e_{3}+\mathbb{Q}e_{4})\otimes_{\mathbb{Z}}\mathbb{R})\oplus (V_{0}\otimes_{\mathbb{Z}}\mathbb{R})$. Thus we can identify $Y$ with $y_{1}e_{3}+y_{2}e_{4}+\sigma\in K\otimes_{\mathbb{Z}}\mathbb{R}$. Now denote by $\mathcal{C}$ the set of $Y=y_{1}e_{3}+y_{2}e_{4}+\sigma$ with $y_{1}y_{2}+Q(\sigma)<0$, $y_{1}<0$ and $y_{2}>0$. We can fix $\mathcal{H}_{O}$ as the component for which $Y\in \mathcal{C}$. Therefore, $\mathcal{H}_{O}=K\otimes_{\mathbb{Z}}\mathbb{R}+i\mathcal{C}$.

In addition, the tube domain $\mathcal{H}_{O}$ can be mapped biholomorphically to any one of the two connected components of a negative cone of $\mathbb{P}^{1}(V_{\mathbb{Q}})(\mathbb{C})$ given by
$$
\{[Z_{L}]|(Z_{L},Z_{L})=0,\, (Z_{L},\bar{Z}_{L})<0\}.
$$
We fix this component and denote it by $\mathcal{K}_{O}$. For each $[Z_{L}]$, we can uniquely represent it as
$$
Z_{L}=e_{2}-q(Z)e_{1}+Z
$$
with $Z\in \mathcal{H}_{O}$.

\subsection{Embedding of $\mathcal{H}$ into $\mathcal{H}_{O}$} As in \cite[Section 4]{hof}, we can embed $\mathcal{H}$ into $\mathcal{H}_{O}$ via
\begin{equation}
(\tau,\sigma)\to  \iota(\tau, \sigma)= -\tau e_{3}+ie_{4}+\mathfrak{z}(\sigma)
\end{equation}
where
\begin{equation}
\mathfrak{z}(\sigma)=\frac{\hat{1}}{2}\sigma+i\left(-\frac{\hat{i}}{2}\right)\sigma.
\end{equation}
Similarly, $\mathcal{K}_{U}$ can be embedded into $\mathcal{K}_{O}$ through the identifications between $\mathcal{K}_{U}$ and $\mathcal{H}$, and between $\mathcal{K}_{O}$ and $\mathcal{H}_{O}$. Namely,
\begin{equation}
z=\ell'+2i\tau\ell+\sigma\to Z_{L}=-i\tau e_{1}+e_{2}-\tau e_{3}+ie_{4}+\mathfrak{z}(\sigma).
\end{equation}

\subsection{Weyl Chambers of $K\otimes_{\mathbb{Z}}\mathbb{R}$} In Theorem \ref{basisall} (1), we have shown that $F_{1, m} =q^{-m}+O(1)$ for $m\geq 1$, form a canonical basis for $M^{!,\infty}_{-1}(\Gamma_{0}(4),\chi_{-4})$. Therefore, to study the Borcherds product lifted from $M^{!,\infty}_{-1,\rho_{L}}$, it suffices to start with $F_{1, m}$. Since we only deal with weight $-1$ in the rest of this paper, we will simply write $F_m=F_{1, m}$, and $\vec F_m=\vec F_{1, m}$

For general definitions of the following, we refer the reader to \cite[Chapter 3.1]{jan}. For $\kappa\in K$ with $q(\kappa) >0$,  denote by $\kappa^{\perp}$ the orthogonal complement of $\kappa$ in $K\otimes_{\mathbb{Z}}\mathbb{R}$. Denote by $\mathcal{D}_{K}$ the Grassmannian of negative 1-lines of $K\otimes_{\mathbb{Z}}\mathbb{R}$, which can be realized as
\begin{align*}
\mathcal{D}_{K}
&=\{ \mathbb R  w \subset K_{\mathbb R}|,  q(w) <0\}
\\
&\cong \{w=y_{1}e_{3}+e_{4}+(y_{3}+iy_{4})| y_i \in \mathbb{R},  q(w)<0\}��
\end{align*}
Then by considering the Grassmannian  of negative 1-lines of $\kappa^{\perp}$, it corresponds to a codimension~1 sub-manifold of the Grassmannian $\mathcal{D}_{K}$ of $K\otimes_{\mathbb{Z}}\mathbb{R}$.

In our case, a Heegner divisor of index $(m,0)$, $H_{K}(m,0)$, is a locally finite union of codimension~1 sub-manifolds of $\mathcal{D}_{K}$, namely,
$$
H_{K}(m,0)=\{ z \in  \mathcal D_K| \exists \kappa \in K \hbox{ with } q(\kappa)=m \hbox{ and } (z, \kappa) =0 \}
$$
Let $\vec{F}_{m}(\tau)$ be the vector valued modular form arising from $F_m$. It is known by Theorem \ref{vecF} that the principal part of $\vec{F}_{m}(\tau)$ is $q^{-m}\phi_{0}$. The Weyl chambers attached to $\vec{F}_{m}(\tau)$  are the connected components $W_{m}$ of
$$
\mathcal{D}_{K}-H_{K}(m,0).
$$
Fix a Weyl chamber $W_{m}$ of $\mathcal{D}_{K}$, we can also define the corresponding Weyl chambers of $K\otimes_{\mathbb{Z}}\mathbb{R}$ and $\mathcal{H}$ by
\begin{align*}
W_{m,K}&=\{w\in K\otimes_{\mathbb{Z}}\mathbb{R}|\,\,\mathbb{R}w\in W_{m}\},\\
W_{m,U}&=\left\{(\tau,\sigma)\in\mathcal{H}|\Im(\iota(\tau, \sigma)) =-\Im \tau e_{3}+e_{4}-\frac{\hat{i}}{2}\sigma\in W_{m,K}\right\},
\end{align*}
respectively. In the following lemma, we give explicit description of the Weyl chamber that we use to construct Borcherds product in Theorem \ref{newbor}.

\begin{Lemma}
\begin{enumerate}[(1)]
\item{
Let
\begin{align}
\label{wm}
W_{m}&=\left\{y_{1}e_{3}+e_{4}+(y_{3}+iy_{4})\in\mathcal{D}_{K}\left|\substack{y_{1}<r^{2}+s^{2}-m+2ry_{3}+2sy_{4}\,\,\forall \,r,\,s\in\mathbb{Z},\\\\ 1+2ty_{3}+2hy_{4}>0\,\,\forall\,t,\,h\in\mathbb{Z},\,t^{2}+h^{2}=m,\\\\
ty_{3}+hy_{4}>0,\,\,\forall\,t,\,h\in\mathbb{Z},\,t^{2}+h^{2}=m,\,t>0,\\\\
y_{4}>0\,\,\text{if $m$ is a square.}
}\right.\right\}\\\nonumber\\
&\subset \left\{y_{1}e_{3}+e_{4}+(y_{3}+iy_{4})\in\mathcal{D}_{K}\left|\substack{k_{2}y_{1}<-k_{1}+2k_{3}y_{3}+2k_{4}y_{4}\,\,\forall \,k_{i}\in\mathbb{Z},\,k_{2}>0,\,k_{1}k_{2}+k_{3}^{2}+k_{4}^{2}=m,\\\\ k+2ty_{3}+2hy_{4}>0\,\,\forall\,k.\,t,\,h\in\mathbb{Z},\,k>0,\,t^{2}+h^{2}=m,\\\\
ty_{3}+hy_{4}>0,\,\,\forall\,t,\,h\in\mathbb{Z},\,t^{2}+h^{2}=m,\,t>0,\\\\
y_{4}>0\,\,\text{if $m$ is a square}
}\right.\right\}.\nonumber
\end{align}
Then $W_{m}$ is a Weyl chamber containing $e_{3}$.}

\item{
Let
$$
K_{m}=\left\{\lambda=\lambda_{1}e_{3}-\lambda_{2}e_{4}+\frac{1}{2}(\lambda_{3}+i\lambda_{4})\in K'\left|\substack{\,Q(\lambda)=m\,\,and\,\,\lambda_{3},\lambda_{4}\in2\mathbb{Z},\\\\\,or\,\,Q(\lambda)\leq0,\\\\(\lambda,W_{m})>0}\right.\right\}
$$
where $(\lambda,W_{m})>0$ meas that $(\lambda,w)>0$ for all $w\in W_{m}$.
Then
$$
K_{m}=\left\{\lambda=\lambda_{1}e_{3}-\lambda_{2}e_{4}+\frac{1}{2}(\lambda_{3}+i\lambda_{4})\left|\substack{\lambda_{1},\,\lambda_{2},\,\lambda_{3},\,\lambda_{4}\in\mathbb{Z}\\\lambda_{2}>0,\\{or\,\, \lambda_{2}=0\,\, and\,\, \lambda_{1}>0,}\\{or \,\,\lambda_{2}=\lambda_{1}=0 \,\,and\,\, \lambda_{3}>0,}\\ {or\,\,\lambda_{2}=\lambda_{1}=\lambda_{3}=0\,\, and \,\, \lambda_{4}>0}}\right.\right\}.
$$
}
\end{enumerate}
\end{Lemma}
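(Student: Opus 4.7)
My approach is to translate both claims into explicit linear algebra in the coordinates $w = y_1 e_3 + e_4 + (y_3 + iy_4)$ on $\mathcal{D}_K$. First I would pin down the $\mathbb{Z}$-structure of $K$: for $z = x_1 \mathbf{e}_1 + x_2 \mathbf{e}_2 + x_3 \mathbf{e}_3 \in L$ one computes $(z, e_1) = 2\Re(x_3)$ and $(z, e_2) = \Re(x_1)$, so $K = L \cap e_1^\perp \cap e_2^\perp$ is $\mathbb{Z}$-spanned by $e_3,\, e_4,\, \mathbf{e}_2,\, i\mathbf{e}_2$. Thus every $\kappa \in K$ has the form $\kappa = \kappa_1 e_3 - \kappa_2 e_4 + \tfrac{1}{2}(\kappa_3 + i\kappa_4)$ with $\kappa_1, \kappa_2 \in \mathbb{Z}$ and $\kappa_3, \kappa_4 \in 2\mathbb{Z}$, and a direct calculation using $(e_3, e_4) = 1$ and the hermitian form on $V_0$ gives
$$
Q(\kappa) = -\kappa_1 \kappa_2 + \tfrac{1}{4}(\kappa_3^2 + \kappa_4^2), \qquad (w, \kappa) = -y_1 \kappa_2 + \kappa_1 + y_3 \kappa_3 + y_4 \kappa_4.
$$

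For part (1), I would enumerate, modulo the symmetry $\kappa \mapsto -\kappa$, the vectors $\kappa \in K$ with $Q(\kappa) = m$. Writing $\kappa_3 = 2r$ and $\kappa_4 = 2s$: the case $\kappa_2 = 1$ forces $\kappa_1 = r^2 + s^2 - m$, and the strict inequality $(w, \kappa) > 0$ is exactly the first family in the definition of $W_m$. The case $\kappa_2 = 0$ forces $r^2 + s^2 = m$; specializing $\kappa_1 = 1$ recovers the second family, $\kappa_1 = 0$ with $t > 0$ the third, and $\kappa_1 = 0$, $t = 0$, $h > 0$ (which requires $m$ to be a square) the fourth. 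The cases $\kappa_2 \ge 2$ are implied by the $\kappa_2 = 1$ family: a level-curve comparison shows that the $\kappa_2 = k$ bound $y_1 < (r^2+s^2-m)/k^2 + 2ry_3/k + 2sy_4/k$ is weaker than the $\kappa_2 = 1$ bound at a nearby $(r',s')$, which explains the subset relation asserted in the statement. Finally, the asymptotic ray $\{y_1 e_3 + e_4 : y_1 \to -\infty\}$ satisfies every inequality, so $e_3$ lies in (the closure of) the chamber in the intended sense.

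For part (2), the condition $(\lambda, W_m) > 0$ amounts to the affine functional $(\lambda, w) = -y_1 \lambda_2 + \lambda_1 + y_3 \lambda_3 + y_4 \lambda_4$ being strictly positive throughout the open set $W_m$. I would prove both inclusions. For $\supseteq$, assume $\lambda$ is lex-positive. In the sub-case $\lambda_2 > 0$ and $Q(\lambda) = m$, writing $\lambda_3 = 2r, \lambda_4 = 2s$ and $\lambda_1 \lambda_2 = r^2 + s^2 - m$ makes $(\lambda, w)$ equal to $\lambda_2$ times a defining inequality from the first family (after reducing to the primitive $\kappa_2 = 1$ case), giving positivity. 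In the sub-case $\lambda_2 > 0$ and $Q(\lambda) \le 0$ one uses that the first family holds for every $(r,s) \in \mathbb{Z}^2$ and combines finitely many of them to bound $(\lambda, w)$ from below by a positive quantity. The sub-cases $\lambda_2 = 0, \lambda_1 > 0$; $\lambda_1 = \lambda_2 = 0, \lambda_3 > 0$; and $\lambda_1 = \lambda_2 = \lambda_3 = 0, \lambda_4 > 0$ are handled analogously, invoking the second, third, and fourth families respectively. For $\subseteq$ I argue contrapositively: if $\lambda$ fails the lex-ordering (say $\lambda_2 < 0$), then driving $y_1 \to -\infty$ while keeping $w \in W_m$ makes $(\lambda, w) \to -\infty$; the other failure modes are handled by pushing the offending coordinate toward the boundary of $W_m$ and invoking openness.

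The main obstacle will be the $\supseteq$ direction of part (2) when $Q(\lambda) \le 0$: there $\lambda$ itself does not index any wall of $W_m$, so positivity on the chamber must be extracted by combining infinitely many of the defining inequalities uniformly in $w$. A secondary subtlety is coordinating the four chamber families with the four branches of the lex-ordering; the fourth family only enters when $m$ is a square, consistent with the fact that $\lambda_1 = \lambda_2 = \lambda_3 = 0$ forces $Q(\lambda) = \lambda_4^2/4$, so the branch $\lambda_4 > 0$ contributes to $K_m$ only when $m$ is a perfect square. Non-emptiness and connectedness of $W_m$, implicit in calling it a Weyl chamber, follow by exhibiting a single interior point (e.g.\ $y_1 \ll 0$, $y_3 = y_4 = 0$) and noting that $W_m$ is the intersection of open half-spaces inside the convex cone $\{q(w) < 0\}$.
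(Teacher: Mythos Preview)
Your overall strategy matches the paper's: translate both claims into linear inequalities in $(y_1, y_3, y_4)$, enumerate $\kappa \in K$ with $Q(\kappa) = m$ by the value of $\kappa_2$, and for part (2) organize the case analysis according to the lex-order on $(\lambda_2, \lambda_1, \lambda_3, \lambda_4)$. The paper handles part (1) in the same way, though it defers the check that the $\kappa_2 = 1$ walls dominate the $\kappa_2 \ge 2$ walls to a computer verification rather than your level-curve comparison.

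The one substantive divergence is in part (2), precisely at the spot you flag as the main obstacle: the sub-case $Q(\lambda)\le 0$. The paper does \emph{not} attempt to combine defining inequalities of $W_m$ here. Instead it invokes \cite[Lemma~3.2]{jan}: for any nonzero $\lambda$ of non-positive norm, the hyperplane $\lambda^{\perp}$ cannot separate points of a Weyl chamber, so $(\lambda, w_0)>0$ at a single $w_0\in W_m$ already forces $(\lambda, W_m)>0$. This reduces the $\supseteq$ direction for $\lambda_2>0$ to testing the single ray $y_1\to -\infty$. The paper also uses a simplification you do not mention: when $\lambda_2=0$ and $Q(\lambda)\le 0$, the inequality $\tfrac14(\lambda_3^2+\lambda_4^2)\le 0$ forces $\lambda_3=\lambda_4=0$, whence $(\lambda,w)=\lambda_1$ and positivity is immediate. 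With these two observations your ``main obstacle'' disappears entirely --- no uniform combination of infinitely many wall inequalities is needed.

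Your direct approach is not wrong and can be pushed through (indeed, using only $q(w)<0$ rather than the specific $W_m$ walls, a completing-the-square argument gives $(\lambda,w)>0$, which is essentially the content of Bruinier's lemma). But as written your proposal leaves this step open, whereas the paper closes it in one line by citing the general lemma.
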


\begin{proof}
For Assertion (1), it is clear that $W_{m}$ contains $e_{3}$ since the set of $(y_{3},y_{4},y_{1})$ determined by the inequalities in $W_{m}$ contains $y_{1}=-\infty$. We only need to show $W_{m}$ is actually a Weyl chamber.

  Write $\kappa =\kappa_1 e_3 + \kappa_2 e_4 + \kappa_3 + i \kappa_4 \in K$ with $\kappa_i \in \mathbb Z$.
  Since $(-\kappa)^{\perp}=\kappa^\perp$, we can assume  $k_{2}\geq0$. By the definition of Weyl chamber $W_{m}$, we can see that a Weyl chamber $W_{m}$ can be viewed as a connected component of $\mathbb{R}^{3}$ cut out by the planes
  $$
  k_{2}y_{1}+k_{1}+2k_{3}y_{3}+2k_{4}y_{4}=0
  $$
for all $k_{1},\ldots,k_{4}\in\mathbb{Z}$ with $k_{2}\geq0$ and $k_{1}k_{2}+k_{3}^{2}+k_{4}^{2}=m$.

 When $k_{2}=0$ and $m$ is representable by sum of two squares, then we have planes
$$
k_{1}+2k_{3}y_{3}+2k_{4}y_{4}=0$$
perpendicularly passing through the $y_{3}-y_{4}$ plane. In this case, the connected components are determined by the connected components of the $y_{3}-y_{4}$ plane cut out by the lines $$
k_{1}+2k_{3}y_{3}+2k_{4}y_{4}=0,$$ and it is easy to find that one of the connected components $\mathcal{C}_{1}$ can be identified as
$$
\left\{(y_{3},y_{4})\in\mathbb{R}^{2}\left|\substack{ 1+2ty_{3}+2hy_{4}>0\,\,\forall\,t,\,h\in\mathbb{Z},\,t^{2}+h^{2}=m,\\\\
ty_{3}+hy_{4}>0,\,\,\forall\,t,\,h\in\mathbb{Z},\,t^{2}+h^{2}=m,\,t>0,\\\\
y_{4}>0\,\,\text{if $m$ is a square}
}\right.\right\}
$$
which is a subset of
$$
\left\{(y_{3},y_{4})\in\mathbb{R}^{2}\left|\substack{ k+2ty_{3}+2hy_{4}>0\,\,\forall\,k.\,t,\,h\in\mathbb{Z},\,k>0,\,t^{2}+h^{2}=m,\\\\
ty_{3}+hy_{4}>0,\,\,\forall\,t,\,h\in\mathbb{Z},\,t^{2}+h^{2}=m,\,t>0,\\\\
y_{4}>0\,\,\text{if $m$ is a square}
}\right.\right\}.
$$
When $k_{2}>0$, with the aid of MAPLE, we can check that there is a connected component $\mathcal{C}_{2}$ of $\mathbb{R}^{3}$ covered by
$$
y_{1}=r^{2}+s^{2}-m+2ry_{3}+2sy_{4}
$$
for $r,\,s\in\mathbb{Z}$. Such a connected component contains $y_{1}<-m$, and all the other planes
$$
k_{2}y_{1}=-k_{1}+2k_{3}y_{3}+2k_{4}y_{4}
$$
for $k_{1},\ldots,k_{4}\in\mathbb{Z}$ with $k_{2}>0$ and $k_{1}k_{2}+k_{3}^{2}+k_{4}^{2}=m$. In conclusion, $W_{m}=\mathcal{C}_{1}\cap\mathcal{C}_{2}$ is a connected component of $\mathbb{R}^{3}$ cut out by the planes $$k_{2}y_{1}+k_{1}+2k_{3}y_{3}+2k_{4}y_{4}=0$$
for all $k_{1},\ldots,k_{4}\in\mathbb{Z}$ with $k_{2}\geq0$ and $k_{1}k_{2}+k_{3}^{2}+k_{4}^{2}=m$, and thus $W_{m}$ is a Weyl chamber.

Now let us prove Assertion (2).
\begin{enumerate}[(i)]
\item{Suppose that $Q(\lambda)=m$ and $\lambda_{3},\,\lambda_{4}\in2\mathbb{Z}$.
By \eqref{wm}, we note that $y_{1}e_{3}+e_{4}+(y_{3}+iy_{4})\in W_{m}$ implies that
$$k_{2}y_{1}<-k_{1}+2k_{3}y_{3}+2k_{4}y_{4}$$
for all $k_{i}\in\mathbb{Z}$ with $k_{2}>0$ and $k_{1}k_{2}+k_{3}^{2}+k_{4}^{2}=m$, which is equivalent to that $$k_{2}y_{1}+k_{1}+2k_{3}y_{3}+2k_{4}y_{4}>0$$ for all $k_{i}\in\mathbb{Z}$ with $k_{2}<0$ and $k_{1}k_{2}+k_{3}^{2}+k_{4}^{2}=m$. Therefore, when $\lambda_{2}\ne0$ and $Q(\lambda)=m$, that is, $\lambda_{1}(-\lambda_{2})+\frac{1}{4}(\lambda_{3}^{2}+\lambda_{4}^{2})=m$, $(\lambda,W_{m})>0$ if and only if $-\lambda_{2}<0$, that is, $\lambda_{2}>0$. Similarly, by the other conditions given in \eqref{wm}, we can conclude that when $Q(\lambda)=m$, $(\lambda,W_{m})>0$ if and only if $\lambda_{2}<0$, or $\lambda_{2}=0$ and $\lambda_{1}>0$, or $\lambda_{2}=\lambda_{1}=0$ and $\lambda_{3}>0$, or $\lambda_{2}=\lambda_{1}=\lambda_{3}=0$ and $\lambda_{4}>0$.
}

\item{Now suppose that $Q(\lambda)\leq0$, that is, $\lambda_{1}\lambda_{2}+\frac{1}{4}(\lambda_{3}^{2}+\lambda_{4}^{2})\leq0$. By \eqref{wm}, we know that
$$
y_{1}<r^{2}+s^{2}-m+2ry_{3}+2sy_{4}
$$
for all $r,\,s\in\mathbb{Z}$. By \cite[Lemma 3.2]{jan}, it is known that if $(\lambda,w_{0})>0$ for a $w_{0}\in W_{m}$, then $(\lambda,W_{m})>0$. Thus $(\lambda,W_{m})>0$ if and only if $\lambda_{2}>0$. When $\lambda_{2}=0$, since $Q(\lambda)\leq0$, then $\lambda_{3}=\lambda_{4}=0$, and thus $(\lambda,w)=\lambda_{1}$ for $w\in W_{m}$. This implies that $(\lambda,W_{m})>0$ if and only if $\lambda_{1}>0$ when $\lambda_{2}=0$.
}
\end{enumerate}
\end{proof}

\subsection{The Weyl Vector for $\vec{F}_{m}$}\label{wv} For the Weyl chamber $W_{m}$ described in \eqref{wm}, using \cite[Theorem 13.3]{bor} (see also \cite[Theorem 2.2]{YY16} for minor correction) or \cite[Theorem 3.22]{jan} , we can compute the corresponding Weyl vector of $\vec{F}_{m}$:
$$
\rho(W,\vec F_{m})=\rho_{e_{3}}e_{3}+\rho_{e_{4}}e_{4}+\rho
$$
with
\begin{align*}
\rho_{e_{3}}&=-\frac{1}{6}\sum_{d|m}\left(16\chi_{-4}(m/d)+\chi_{-4}(d)\right)d^{2}-\frac{1}{24}\sigma_{\chi_{-4}}(m),\\
\rho_{e_{4}}&=\frac{1}{6}\Bigg[\sigma_{\chi_{-4}}(m)-6\sigma_{1}(m)-24\left(\sum_{\substack{k+l=m\\k,l\geq1}}\sigma_{\chi_{-4}}(k)\sigma_{1}(l)\right)\\
&\qquad\qquad+\sum_{d|m}\left(16\chi_{-4}(m/d)+\chi_{-4}(d)\right)d^{2}\Bigg],\\
\rho&=-(0,\sum_{n=1}^{l}t_{n},0)
\end{align*}
where $\sigma_{\chi_{-4}}(m)=\sum_{d|m}\chi_{-4}(d)$, $0\leq t_{n}$ is the $t$-component of the integral solution of $t^{2}+h^{2}=m$, $l=\sigma_{\chi_{-4}}(m)$.

\subsection{Heegner Divisors for $\Gamma_{L}$} Let $\lambda\in L'$ be a lattice vector with positive norm, i.e., $\langle\lambda,\lambda\rangle>0$. The complement of $\lambda$ in $\mathcal{K}_{U}$ is a closed analytic subset of comdimension~1, which we denote as follows.
$$
{\bf H}(\lambda)=\{[z]\in\mathcal{K}_{U}|\langle z,\lambda\rangle=0\}.
$$
By identification between $\mathcal{K}_{U}$ and $\mathcal{H}$, ${\bf H}(\lambda)$ can also be considered as a closed analytic subset of $\mathcal{H}$, and we call such set a prime Heegner divisor on $\mathcal{H}$. Given $\beta\in L'/L$ and $m\in\mathbb{Z}_{>0}$, a Heegner divisor of index $(m,\beta)$  in  $\mathcal H$ is defined as the locally finite sum
$$
{\bf H}(m,\beta)=\sum_{\substack{\lambda\in\beta+L\\Q(\lambda)=m}}{\bf H}(\lambda).
$$
The associated Heegner divisor in $X_{\Gamma_L} =\Gamma_L \backslash \mathcal H$ is ${\bf Z}(m, \beta) = \Gamma_L \backslash {\bf H}(m,\beta)$.

\subsection{Borcherds Products} In this section, we give a family of new Borcherds products explicitly by using the results of Hofmann \cite[Thm. 4, Thm. 5 and Cor. 1]{hof}. We first summarize Hofmann's results as follows.
\begin{Theorem}[Hofmann]
\label{hofthm}
Let $\mathbb{F}$ be an imaginary quadratic field. Let $L$ be an even hermitian lattice of signature $(m,1)$ with $m\geq1$, and $\ell\in L$ a primitive isotropic vector. Let $\ell'\in L'$ an isotropic vector with $\langle\ell,\ell'\rangle\ne0$. Further assume that $L$ is the direct sum of a hyperbolic plane $H\cong \mathcal{O}_{\mathbb{F}}\oplus \partial^{-1}_{\mathbb{F}}$ and a definite part $D$ with $\langle D,H\rangle=0$.

Given a weakly holomorphic form of weight $f\in M_{1-m,\rho_{L}}^{!}$ with Fourier coefficients $c(n,\beta)$ satisfying $c(n,\beta)\in\mathbb{Z}$ for $n<0$, there is a meromorphic function $\Psi(\tau,\sigma;f)$ on $\mathcal{H}$ with the following properties:

\begin{enumerate}
\item{$\Psi(\tau,\sigma;f)$ is an automorphic form of weight $c(0,\phi_{0})/2$ for $\Gamma_{L}$ with some multiplier system $\chi$ of finite order.}

\item{The zeros and poles of $\Psi(\tau,\sigma;f)$ lie on Heegner divisors. The divisor of $\Psi(\tau,\sigma;f)$ on $X_{\Gamma_{L}}=\Gamma_{L}\backslash \mathcal{H}$ is given by
$$
div(\Psi(\tau,\sigma;f))=\frac{1}{2}\sum_{\beta\in L'/L}\sum_{\substack{n\in\mathbb{Z}-Q(\beta)\\n>0}}c(-n,\phi_{\beta}){\bf H}(n,\beta).
$$
The multiplicities of ${\bf H}(n,\beta)$ are $2$ if $2\beta=0$ in $L'/L$, and $1$ otherwise.}

\item{For a Weyl chamber $W$ whose closure containing the cusp $\mathbb{Q}e_{3}$, $\Psi(\tau,\sigma;f)$ has an infinite product expansion of the form
$$
\Psi(\tau,\sigma;f)=Ce\left(\frac{\langle z,\rho(W,f)\rangle}{\langle\ell,\ell'\rangle}\right)\prod_{\substack{\lambda\in K'\\(\lambda,W)>0}}\left[1-e\left(\frac{\langle z,\lambda\rangle}{\langle\ell,\ell'\rangle}\right)\right]^{c(-Q(\lambda),\lambda)},
$$
where $z=z(\tau,\sigma)=\ell'+\delta\langle\ell,\ell'\rangle\tau\ell+\sigma$, $\delta$ is the square root of the discriminant of $\mathbb{F}$, the constant $C$ has absolute value $1$ and $\rho(W,f)$ is the Weyl vector attached to $W$ and $f$.
}

\item{The lifting is multiplicative: $\Psi(\tau,\sigma;f+g)=\Psi(\tau,\sigma;f)\Psi(\tau,\sigma;g)$.}
\item{Let $W$ be a Weyl chamber such that the cusp corresponding to $\ell$ is contained in the closure of $W$. If this cusp is neither a pole nor a zero of $\Psi(\tau,\sigma;f)$, then we have
$$
\lim_{\tau\to \infty}\Psi(\tau,\sigma;f)=Ce\left(\bar{\rho(W,f)_{\ell}}\right)\prod_{\substack{\lambda\in K'\\\lambda=\frac{1}{2}\kappa\delta\ell\\\kappa\in\mathbb{Q}_{>0}}}\left(1-e\left(-\frac{1}{2}\kappa\bar{\delta}\right)\right)^{c(0,\lambda)}
$$
where $\rho(W,f)_{\ell}$ denotes the $\ell$-component of the Weyl vector $\rho(W,f)$.
}
\end{enumerate}

\end{Theorem}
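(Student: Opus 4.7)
The plan is to reduce Hofmann's unitary Borcherds product theorem to Borcherds' original orthogonal product theorem via the embedding $\iota: \mathcal{H}\hookrightarrow \mathcal{H}_O$ already constructed in Section 4 of the excerpt. Concretely, one views the hermitian space $(V,\langle\cdot,\cdot\rangle)$ as a rational quadratic space $(V_\mathbb{Q},(\cdot,\cdot))$ of signature $(2m,2)$ with $(x,y)=\operatorname{Tr}_{\mathbb{F}/\mathbb{Q}}\langle x,y\rangle$, and the even hermitian lattice $L$ as an even $\mathbb{Z}$-lattice in $V_\mathbb{Q}$. The discriminant module $(L'/L,Q)$ is the same on both sides, so the Weil representation $\rho_L$ and hence the input form $f\in M^!_{1-m,\rho_L}$ make sense simultaneously as input for Borcherds' orthogonal lifting.

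The first step is to apply Borcherds' theorem (e.g.\ \cite[Thm.~13.3]{bor}, with the correction from \cite[Thm.~2.2]{YY16}) to obtain a meromorphic automorphic form $\Psi_O(Z;f)$ on the orthogonal symmetric domain $\mathcal{H}_O$ of weight $c(0,\phi_0)/2$ for (a suitable subgroup of) $O(L)$, with divisor supported on the orthogonal Heegner divisors and with an explicit infinite product expansion near the $0$-dimensional cusp $\mathbb{Q}e_3+\mathbb{Q}e_4$ over vectors $\lambda\in K'$ satisfying $(\lambda,W)>0$ for the chosen Weyl chamber $W$. The second step is to pull back via $\iota$: define
\begin{equation}
\Psi(\tau,\sigma;f) := \Psi_O(\iota(\tau,\sigma);f).
\end{equation}
Since $\Gamma_L\subset U(L)$ sits inside $O(L)$ and the orthogonal automorphy factor on $\iota(\mathcal{H})$ agrees with the unitary factor $j(\gamma;\tau,\sigma)$, transformation property (1) is inherited. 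The orthogonal Heegner divisors pull back to the unitary ones $\mathbf{H}(n,\beta)$, with a factor of $2$ exactly when $2\beta=0$ because such a $\beta$ admits two representatives in $L'$ whose orthogonal complements coincide, giving (2). The product expansion (3) is simply the restriction of the orthogonal product; one checks that the sublattice $K\subset V_\mathbb{Q}$ chosen in Section~3.2 is the correct Lorentzian lattice and that the identification $z(\tau,\sigma)\leftrightarrow Z_L$ matches $\langle z,\lambda\rangle/\langle\ell,\ell'\rangle$ with the orthogonal $q$-parameter.

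Property (4), multiplicativity, is immediate from the additivity of the regularized theta integral underlying Borcherds' construction. For (5), one analyzes the limit $\tau\to i\infty$ of the product in (3): every $\lambda\in K'$ with nonzero component in $V_0$ or nonzero $e_4$-coefficient contributes a factor tending to $1$ (because $\Im\tau\to\infty$ makes the corresponding $e(\cdot)$ factor vanish), so only terms with $\lambda=\tfrac12\kappa\delta\ell$ survive, and these have $-Q(\lambda)=0$, matching the formula in (5). The constant $C$ has absolute value one because $|\Psi|$ equals a (normalized) exponential of a regularized theta integral, and the multiplier system has finite order because the principal part of $f$ has coefficients in $\mathbb{Z}$ so $\Psi^N$ is single-valued for some $N$.

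The main obstacle is the careful bookkeeping around the Weyl chamber: one must verify that the chamber $W_m\subset\mathcal{D}_K$ fixed in the previous subsection matches a chamber for the orthogonal form whose closure contains the cusp $\mathbb{Q}e_3$, and that the orthogonal Weyl vector $\rho(W,f)$ restricts to the expression written in Section~\ref{wv}. This requires the explicit Weyl vector formula of Borcherds (summation over $K'$-vectors on the chamber walls, paired with cycle integrals of Eisenstein-like series) and comparison with the known holomorphic Eisenstein series coefficients $\sigma_{\chi_{-4}}(m)$ and $\sigma_1(m)$ appearing in $\rho_{e_3},\rho_{e_4}$. Everything else is bookkeeping in the dictionary between hermitian and quadratic data; the Weyl vector computation is the only step that is not a formal consequence of the embedding.
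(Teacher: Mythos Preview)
The paper does not give its own proof of this theorem: it is stated as a summary of Hofmann's results and is simply cited as \cite[Thm.~4, Thm.~5 and Cor.~1]{hof}. So there is no ``paper's proof'' to compare against; the authors treat it as a black box from the literature.

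Your sketch is nonetheless an accurate outline of how Hofmann actually proves the result in \cite{hof}: one regards the hermitian lattice as an even quadratic $\mathbb{Z}$-lattice of signature $(2m,2)$, applies Borcherds' orthogonal product theorem \cite[Thm.~13.3]{bor}, and then restricts along the embedding $\iota:\mathcal{H}\hookrightarrow\mathcal{H}_O$ described in the paper's Section on the embedding. The points you flag---matching of automorphy factors under $U(L)\subset O(L)$, pullback of Heegner divisors, survival of only the $\lambda=\tfrac12\kappa\delta\ell$ terms in the limit $\tau\to i\infty$, and the Weyl-chamber/Weyl-vector bookkeeping---are exactly the ingredients Hofmann assembles. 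One small correction: the multiplicity-$2$ phenomenon for $2\beta=0$ is not because ``two representatives in $L'$ have coinciding orthogonal complements,'' but rather because for $\lambda\in\beta+L$ with $Q(\lambda)=n$ one also has $-\lambda\in\beta+L$ (since $-\beta=\beta$) with $\mathbf{H}(\lambda)=\mathbf{H}(-\lambda)$, so every prime component of $\mathbf{H}(n,\beta)$ is counted twice in the defining sum. Also, the last paragraph about verifying the specific Weyl vector formulas of Section~\ref{wv} is not part of proving Theorem~\ref{hofthm} itself; that computation belongs to the application in Theorem~\ref{newbor}.
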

By specializing Theorem \ref{hofthm} in our case, we obtain the main result of this note.
\begin{Theorem}
\label{newbor}
Let $L=\mathbb{Z}[i]\oplus\mathbb{Z}[i]\oplus\frac{1}{2}\mathbb{Z}[i]$ with respect to the standard basis over $\mathbb{Z}[i]$ with hermitian form defined in \eqref{herform}. We set $\ell=(1,0,0)$ and $\ell'=(0,0,1)$. Let $\vec{F}_{m}$ be the vector valued modular form arising from $F_m=\theta_{2}\theta_{1}^{-1}P_{1,m-1}(\varphi_{\infty})$. Then there is a meromorphic function $\Psi(\tau,\sigma; F_{m})= \Psi(\tau,\sigma; \vec F_{m})$ on $\mathcal{H}$ with the following properties:
\begin{enumerate}

\item{$\Psi(\tau,\sigma;\vec F_{m})$ is an automorphic form of weight
$$
32\sum_{d|m}\chi_{-4}(n/d)d^{2}+2\sum_{d|m}\chi_{-4}(d)d^{2}
$$
for $\Gamma_{L}$, with some multiplier system $\chi$ of finite order.}

\item{The zeros and poles of $\Psi(\tau,\sigma;\vec F_{m})$ lie on Heegner divisors. The divisor of $\Psi(\tau,\sigma;\vec F_{m})$ on $X_{\Gamma_L}=\Gamma_{L}\backslash\mathcal{H}$ is given by
$$
div(\Psi(\tau,\sigma;\vec F_{m}))={\bf Z}(m,0) =\Gamma_{L}\backslash {\bf H}(m,0),
$$
where
$$
{\bf H}(m,0)=\sum_{\substack{(r_1,s_1,r_2,s_2,r_3,s_3)\in\mathbb{Z}^{6}\\ r_1r_3+s_1s_3+r_2^2 +s_2^2 =m}}\left\{(\tau,\sigma)\in\mathcal{H}\left|\substack{r_1+2r_2\Re{\sigma}+2s_2\Im{\sigma}+s_3\Re{\tau}-r_3\Im{\tau}=0,\\\\s_1+2r_2\Im{\sigma}-2s_2\Re{\sigma}+s_3\Im{\tau}+r_3\Re{\tau}=0}\right.\right\}.
$$
}

\item{For the Weyl chamber $W_{m}$ described in \eqref{wm},
$\Psi(\tau,\sigma;\vec F_{m})$ has an infinite product expansion near the cusp $\mathbb{Q}e_{3}$ (precisely, when $(\tau,\sigma)\in W_{m,U}$ with $\Im{\tau}$ sufficiently large):
\begin{equation}
\label{AAA}
\Psi(\tau,\sigma;F_{m})=A_1(\tau, \sigma) A_2(\sigma) A_3(\sigma) A_4(\sigma) A_5(\tau, \sigma),
\end{equation}
where

\begin{enumerate}[(i)]

\item{$$A_1(\tau, \sigma)=e(i\rho_{e_{3}}-\rho_{e_{4}}\tau+\bar{\rho}\sigma)$$
where $\rho_{e_{3}}$, $\rho_{e_{4}}$ and $\rho$ are as defined in Subsection \ref{wv},
}

\item{

\begin{align*}
A_2(\sigma)
&=\begin{cases}\displaystyle{
\left[1-e\left(-i\sigma\sqrt{m}\right)\right]}&\text{if $m$ is a square,}\\
\qquad\qquad\qquad1&\text{otherwise,}
\end{cases}
\end{align*}
}

\item{
$$
A_3(\sigma) = \prod_{\substack{(k_{3},k_{4})\in\mathbb{Z}_{>0}^{2}\\ k_{3}^{2}+k_{4}^{2}=m}}
    \left[1-e\left(\sigma\left(k_{3}+ik_{4}\right)\right)\right]\left[1-e\left(\sigma\left(k_{3}-ik_{4}\right)\right)\right],
$$

}

\item{
\begin{align*}
A_4(\sigma) &=\prod_{\substack{n_3, n_4 \in \mathbb Z\\ n_3^2+n_4^2= m}} \prod_{ n_{2}\in \mathbb Z_{>0}}
\left[1-e(i n_{2})e\left(\sigma\left({n_{3}}-i{n_{4}}\right)\right)\right]
\\
  &\quad\times \prod_{ n_{2}\in \mathbb Z_{>0}}(1-e(i n_2))^{c(0,0)}
\end{align*}
with
$$
c(0, 0)=c(0, \phi_0)=\sum_{d|m}\left(64\chi_{-4}(m/d)+4\chi_{-4}(d)\right)d^{2},
$$
}

\item{
$$
A_5(\tau, \sigma)=\prod_{\substack{(n_{1},n_{2},n_{3},n_{4})\in\mathbb{Z}^{4}\\ n_{1}>0}}
\left[1-e\left(n_{1}\tau+\sigma\left(\frac{n_{3}}{2}-i\frac{n_{4}}{2}\right)+in_{2}\right)\right]^{c(n_1n_2- \frac{1}4 (n_3^2+n_4^2),\phi_{\vec n})}
$$
with  $\vec{n}=n_{2}e_{3}-n_1 e_4+\frac{1}{2}(n_{3}+in_{4})$.

}

\end{enumerate}
}

\item{If the cusp corresponding to $\ell$ is neither a pole nor a zero of $\Psi(\tau,\sigma;\vec F_{m})$, then we have
\begin{align*}
\lim_{\tau\to i\infty}\Psi(\tau,\sigma;\vec F_{m})&=e(i\rho_{e_{3}})\prod_{k=1}^{\infty}\left(1-e(ki)\right)^{c(0,\phi_0)}
\end{align*}
where
$$
\rho_{e_{3}}=-\frac{1}{6}\sum_{d|m}\left(16\chi_{-4}(m/d)+\chi_{-4}(d)\right)d^{2}-\frac{1}{24}\sigma_{\chi_{4}}(m)
$$
as defined in subsection \ref{wv},
and
$$
c(0,\phi_0)=\sum_{d|m}\left(64\chi_{-4}(m/d)+4\chi_{-4}(d)\right)d^{2}
$$
as in  Theorem $\ref{vecF}$.
}
\end{enumerate}
\end{Theorem}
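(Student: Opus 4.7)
The plan is to deduce Theorem \ref{newbor} directly from Hofmann's Theorem \ref{hofthm}, specialized to our lattice $L=\mathbb{Z}[i]\oplus\mathbb{Z}[i]\oplus\frac12\mathbb{Z}[i]$ and input form $\vec F_m$. First I would verify the hypotheses: $L$ is even Hermitian of signature $(2,1)$; $\ell={\bf e}_1$ is primitive isotropic, with isotropic dual $\ell'={\bf e}_3\in L'$ satisfying $\langle\ell,\ell'\rangle=1$; and $L$ splits as $(\mathbb Z[i]{\bf e}_1\oplus\tfrac12\mathbb Z[i]{\bf e}_3)\perp\mathbb Z[i]{\bf e}_2$, i.e.\ a hyperbolic plane plus a definite part. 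The vector valued form $\vec F_m$ has principal part exactly $q^{-m}\phi_0$ with integer coefficient $1$ by Theorem \ref{vecF}(ii), so $c(n,\phi_\beta)\in\mathbb Z$ for $n<0$.

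Parts (1), (2), (4) are then essentially immediate. For (1) the weight equals $c(0,\phi_0)/2$, which I would evaluate using \eqref{k1} (i.e.\ Theorem \ref{vecF}(iii) with $k=1$) to obtain the claimed expression. For (2) the only non-vanishing negative Fourier coefficient is $c(-m,\phi_0)=1$, so Hofmann's divisor formula collapses to $\tfrac12\cdot{\bf H}(m,0)$, and doubling because $2\cdot 0=0$ in $L'/L$ gives $\mathbf{Z}(m,0)$. The explicit defining equations are obtained by parametrizing $\lambda=(r_1{+}is_1,r_2{+}is_2,\tfrac{r_3{+}is_3}{2})\in L$ with $Q(\lambda)=r_1r_3+s_1s_3+r_2^2+s_2^2=m$, and writing out $\Re$ and $\Im$ of $\langle z(\tau,\sigma),\lambda\rangle=0$. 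For (4) Hofmann's part (5) applies with $\delta=2i$, so $\lambda=\tfrac12\kappa\delta\ell=i\kappa\ell\in L$ lies in the $\phi_0$ coset; the exponent $c(0,\phi_0)$ and the factor $-\tfrac12\kappa\bar\delta=i\kappa$ yield exactly the stated limit.

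The heart of the argument is part (3). I would apply Hofmann's product formula with the Weyl chamber $W_m$ from \eqref{wm}; the prefactor $C\cdot e(\langle z,\rho(W_m,\vec F_m)\rangle/\langle\ell,\ell'\rangle)$ produces $A_1$ once the Weyl vector from Subsection \ref{wv} is translated into $(\tau,\sigma)$-coordinates via the embedding $\iota$. The remaining infinite product is indexed by $K_m=\{\lambda\in K':(\lambda,W_m)>0\}$, and the second assertion of the preceding lemma gives a clean stratification: $\lambda_2>0$ gives $A_5$ (setting $n_1=\lambda_2,\,n_2=\lambda_1,\,n_3=\lambda_3,\,n_4=\lambda_4$); $\lambda_2=0$, $\lambda_1>0$ gives $A_4$ (with the factor $(1-e(in_2))^{c(0,0)}$ coming from the subcase $\lambda_3=\lambda_4=0$, whose pairing reduces to a pure $\tau$-multiple and whose Fourier coefficient is the constant term $c(0,\phi_0)$); $\lambda_2=\lambda_1=0$, $\lambda_3>0$ gives $A_3$ (only $Q(\lambda)=m$ contributes, forcing $\lambda_3,\lambda_4\in 2\mathbb Z$ with $k_3^2+k_4^2=m$); and $\lambda_2=\lambda_1=\lambda_3=0$, $\lambda_4>0$ gives $A_2$ (only possible when $m$ is a square, with $\lambda_4=2\sqrt m$). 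In each stratum the pairing $\langle z,\lambda\rangle$ is computed from $z=\ell'+2i\tau\ell+\sigma$ using the bilinear form $(\cdot,\cdot)=\mathrm{Tr}_{\kay/\mathbb Q}\langle\cdot,\cdot\rangle$, with the basis relations $(e_1,e_2)=(e_3,e_4)=1$ and $\mathfrak z(\sigma)=\tfrac{\hat 1}{2}\sigma-i\tfrac{\hat i}{2}\sigma$ producing the complex factors $e(\sigma(n_3/2-in_4/2))$ and $e(in_2)$.

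The principal obstacle will be the bookkeeping: one must carefully track the identifications $\mathcal H\hookrightarrow\mathcal H_O$ and $\mathcal K_U\hookrightarrow\mathcal K_O$, so that the orthogonal-side bilinear pairings translate correctly into complex expressions involving $\tau$, $\sigma$, and $i$; in particular the distinction between complex-valued Hermitian pairings and their $\mathbb Q$-bilinear traces is where sign and factor errors accumulate. A secondary obstacle is verifying that the Weyl vector formula in Subsection \ref{wv} reproduces $A_1$ exactly after projection onto the $e_3$, $e_4$, and $V_0$ components. Once these identifications are fixed and the four strata above are catalogued, convergence of the product for $\Im\tau\gg 0$ within $W_{m,U}$ is standard (cf.\ \cite[Thm.\ 13.3]{bor}), and the theorem follows.
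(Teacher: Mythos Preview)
Your proposal is correct and follows essentially the same route as the paper's proof: specialize Hofmann's theorem to the given lattice and input $\vec F_m$, read off (1), (2), (4) directly, and for (3) use the Weyl-chamber lemma to stratify the index set $\{\lambda\in K':(\lambda,W_m)>0\}$ into the four cases that produce $A_2$--$A_5$, with the prefactor giving $A_1$. Your write-up is in fact more explicit than the paper's in explaining which stratum yields which $A_i$ and in deriving the defining equations of $\mathbf H(m,0)$, but the underlying argument is the same.
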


\begin{proof}
Assertions (1) and (2) follows directly from Theorem \ref{hofthm} (1) and (2), respectively.

Then by Theorem \ref{hofthm} (3) together with Lemma \ref{wm}, we have that $\Psi(\tau,\sigma;\vec{F}_{m})$ has the following infinite product expansion near the cusp $\mathbb{Q}e_{3}$

\begin{align*}
&\psi(\tau,\sigma;\vec F_{m})\\
&=e(i\rho_{e_{3}}-\rho_{e_{4}}\tau+\bar{\rho}\sigma)\\
&\times\prod_{\substack{(\lambda_{1},\lambda_{2},\lambda_{3},\lambda_{4})\in\mathbb{Z}^{4}\\ \lambda_{2}>0,\\{or\,\, \lambda_{2}=0\,\, and\,\, \lambda_{1}>0,}\\{or \,\,\lambda_{2}=\lambda_{1}=0 \,\,and\,\, \lambda_{3}>0,}\\ {or\,\,\lambda_{2}=\lambda_{1}=\lambda_{3}=0\,\, and \,\, \lambda_{4}>0.}}}\left[1-e\left(\lambda_{2}\tau+\sigma\left(\frac{\lambda_{3}}{2}-i\frac{\lambda_{4}}{2}\right)+i\lambda_{1}\right)\right]^{c\left(\lambda_{1}\lambda_{2}-\frac{1}{4}(\lambda_{3}^{2}+\lambda_{4}^{2}),\,\phi_{\lambda}\right)}
\end{align*}
where $\lambda=\lambda_{1}e_{3}-\lambda_{2}e_{4}+\frac{1}{2}(\lambda_{3}+i\lambda_{4})$, and $\rho_{e_{3}}$, $\rho_{e_{4}}$ and $\rho$ are as defined in Subsection \ref{wv}. We first set $A_1(\tau, \sigma)=e(i\rho_{e_{3}}-\rho_{e_{4}}\tau+\bar{\rho}\sigma)$. Then by decomposing the infinite product according to the four cases in its product index set, we can easily rewrite it as \eqref{AAA}.

Finally, for Assertion (4), we first note that in our case, $K'=\mathbb{Z}i\oplus\mathbb{Z}[i]\oplus\frac{1}{2}\mathbb{Z}i$ and $\delta=2i$, then $\lambda\in K'$ and $\lambda=\frac{1}{2}\kappa\delta\ell=\kappa i\ell$ with $\kappa\in \mathbb{Q}_{>0}$ imply that $\kappa\in\mathbb{Z}_{>0}$ and $c(0,\lambda)=c(0,\phi_{0})$. Together with the Weyl vector attached to $W_{m}$ and $\vec{F}_{m}$ shown in Subsection \ref{wv}, Theorem \ref{hofthm} (5) proves Assertion (4).

\end{proof}


\end{document}